\newcommand{\R}{\mathbb{R}}
\newtheorem{theorem}{Theorem}
\newtheorem{proposition}{Proposition}[section]
\newtheorem{remark}{Remark}[section]
\newcommand\AMSname{AMS subject classifications}
\title{A minimizing problem of a polyharmonic operator with Critical Exponent  }
\author{Asma Benhamida\thanks{D\'epartement de Math\'ematiques,
Facult\'e des Sciences de Tunis,
Campus Universitaire, 2092 Tunis,
Universit\'e Tunis El Manar, Laboratoire Analyse non linéaire et Géométrie LR21LS08,
Tunisie.
E-mail: asma.benhamida2019@gmail.com}, $\,$  Rejeb Hadiji\thanks{Universit\'e Paris-Est Cr\'eteil, LAMA, Laboratoire d'Analyse et
de Math\'ematiques Appliqu\'ees, CNRS UMR 8050, UPEC, F-94010 Cr\'eteil, France. E-mail :
rejeb.hadiji@u-pec.fr } $\,$  and Habib Yazidi
\thanks{Universit\'e de Tunis, Ecole Nationale Sup\'erieure d'Ing\'enieurs de Tunis, 5 Avenue Taha Hssine, Bab Mnar 1008 Tunis,  Tunisie. E-mail : habib.yazidi@gmail.com} }
\begin{document}
\date{}

\maketitle
\begin{abstract}
In this work, we study the two following minimization problems for $r \in \mathbb{N}^{*}$,
\begin{equation*}
\begin{array}{ccc}
S_{0,r}(\varphi)=\displaystyle\inf_{u\in H_{0}^{r}(\Omega),\,\|u+\varphi\|_{L^{2^{*r}}}=1}\|u\|_{r}^{2}&
\textrm{and\,\,}&
S_{\theta,r}(\varphi)=\displaystyle\inf_{u\in H_{\theta}^{r}(\Omega),\,\|u+\varphi\|_{L^{2^{*r}}}=1}\|u\|_{r}^{2},
\end{array}
\end{equation*}
where $\Omega \subset \mathbb{R}^{N}, $ $N > 2r$, is a smooth bounded domain, $2^{*r}=\frac{2N}{N-2 r}$, $\varphi\in L^{2^{*r}} (\Omega) \cap C(\Omega)$ and the norm $\|. \|_{r}=\displaystyle{ \int_{\Omega} |(-\Delta)^{\alpha} .|^{2}dx}$ where $ \alpha=\frac{r}{2} $ if $r$ is even and $\|. \|_{r}=\displaystyle{ \int_{\Omega} |\nabla(-\Delta)^{\alpha} . |^{2}dx }$ where $\alpha = \frac{r-1}{2}$ if $r$ is odd.
Firstly, we prove that, when $\varphi \not\equiv 0, $ the infimum in $S_{0,r}(\varphi)$ and $S_{\theta,r}(\varphi)$ are attained.
Secondly, we show that $ S_{\theta,r}(\varphi)< S_{0,r}(\varphi) $ for a large class of $
\varphi$.

\medskip

\noindent Keywords : {Critical Sobolev exponent, poly-harmonic operator, minimizing problem.}
\medskip

\par
\noindent2010 \AMSname: 35J20, 35J25, 35H30, 35J60.
\end{abstract}

\section{Introduction and main results}
Let $r \in \mathbb{N}^{*}$ and $\Omega$ be a bounded domain of $\R^{N}$ with $N\geq 2r+1$. We define the space $H^{r}_{0}(\Omega)$ and $H^{r}_{\theta}(\Omega)$ by
$$H^{r}_{0}(\Omega):=\left\{ f\in H^{r}(\Omega) \mid D^{k}f=0 \,\textrm{on $\partial\Omega$ for $k=0,1\ldots, r-1$}\right\}$$
and
$$H^{r}_{\theta}(\Omega):=\left\{ f\in H^{r}(\Omega) \mid (-\Delta)^{k}f=0 \,\textrm{on $\partial\Omega$ for $0\leq k \leq [(r+1)/2]$}\right\}.$$
where $D^{k}f$ denote any derivative of order $k$ of the function $f$ and $[(r+1)/2]$ is the integer part of $(r+1)/2$.\\
Define the following norm
\begin{equation*}
\|f\|_{r}^{2}=\left\{\begin{array}{lll}
\displaystyle{ \int_{\Omega} |(-\Delta)^{r/2} f|^{2}dx} \quad &\textrm{if $r$ is even},\\
\displaystyle{ \int_{\Omega} |\nabla(-\Delta)^{\frac{r-1}{2}} f |^{2}dx } \quad &\textrm{if $r$ is odd}.
\end{array}
\right.
\end{equation*}

Then, we consider the following minimizing problem
\begin{equation}
S_{0,r}(\varphi)=\inf_{u\in H_{0}^{r}(\Omega),\,\|u+\varphi\|_{L^{2^{*r}}}=1}\|u\|_{r}^{2}
\label{eq1}
\end{equation}
and
\begin{equation}
S_{\theta,r}(\varphi)=\inf_{u\in H_{\theta}^{r}(\Omega),\,\|u+\varphi\|_{L^{2^{*r}}}=1}\|u\|_{r}^{2},
\label{eq2}
\end{equation}
where the function $\varphi\in L^{2^{*r}} (\Omega) \cap C(\Omega)$ and $2^{*r}=\frac{2N}{N-2 r}$ is the limiting Sobolev exponent in the imbedding $ H_0^r(\Omega) \hookrightarrow L^{q}(\Omega), $ $ 1 \leq q \leq 2^{*r}. $\\

The problem under consideration in this paper is related to the fact that \\ resembles some geometrical equations involving where lack of compactness occurs. \\
The statement of this problem on the bounded domain can be associated with \\ problems of the resolution of some minimization problem from geometry and physics, where the goal of our minimization problem is to determine the existence of a non-trivial minimum. \\
In 1986, Brezis considered in \cite{B} the first formulation of a problem which $ r=1$ and $ \varphi = 0$, see also \cite{BN1}, \cite{FS} and \cite{HY}. In \cite{BN2}, Bresiz and Nirenberg provided the first positive answer to this problem for $r=1$ stated in terms of an existence result from the infimum under the condition $ \varphi \not\equiv 0. $ Since then,
this problem has received many intention and this result has been improved in several ways. This include some results in the case where $ r=2$ and $ \varphi = 0$ (see for instance \cite{V1} and \cite{V2} ) that $S_{\theta,2}(0)= S_{0,2}(0)= S $ is the best Sobolev constant and $S$ is not achieved. In the papers \cite{BH} and \cite{GHP} the authors studied the problems (\ref{eq1}) and (\ref{eq2}) for the biharmonic operator $(-\Delta)^{2}$, see also \cite{GGS} for other study of biharmonic operator. In this paper, our motivation comes of the study of the critical growth of some polyharmonic operator. Polyharmonic equations have been considered in several works, see for exemple \cite{G}. This type of problems have many applications, we can cite the study of quantitate properties of solutions of semi-linear problems, the Paneitz type operator which appears in Willmore surfaces and in geometry, see \cite{P}.
In this paper we are interested in the two minimization problems (\ref{eq1}) and (\ref{eq2}) where the function $\varphi$ is given in $ L^{2^{*r}} (\Omega)\cap C(\Omega)$. Precisely, we consider the case $ r \geq 2$ and $ \varphi $ is not identically $0$ which is a natural generalization of the previous works. \\

Since we have $H_0^r(\Omega) \subset H_{\theta}^{r}(\Omega) $ we always have $ S_{\theta,r} \leq S_{0,r}$. A natural question arises is do we still have  $ S_{\theta,r}(\varphi) < S_{0,r}(\varphi) $ or $ S_{\theta,r}(\varphi) = S_{0,r}(\varphi) $ ? and in the case when the infima $S_{\theta,r}$ and $S_{0,r}$ are reached respectively by $u_\theta$ and $u_0$ can we know the sign of the Lagrange multiplier? and is $u_\theta $ in $H_0^r(\Omega)? $ \\

 Our main results can be stated as follows.
 \begin{theorem}~\\
Let $\Omega$ a regular bounded domain in $\R^{N}$ with $N\geq 2r+1$ and $\varphi\in L^{2^{*r}}\cap C(\Omega)\backslash \{0\}$. Then $S_{0,r}(\varphi)$ and $S_{\theta,r}(\varphi)$ are achieved.
\label{th1}
\end{theorem}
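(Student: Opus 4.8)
The plan is to prove both statements by a single concentration–compactness argument in which the hypothesis $\varphi\not\equiv 0$ is exactly what restores the compactness destroyed by the critical exponent $2^{*r}$. I treat $S_{0,r}(\varphi)$ and $S_{\theta,r}(\varphi)$ in parallel; the only structural facts I need are that $H_0^r(\Omega)\subset H_\theta^r(\Omega)$ and that on each space $\|\cdot\|_r^2$ is coercive and equivalent to the full $H^r(\Omega)$ norm. For $H_0^r(\Omega)$ this is a Poincaré inequality; for $H_\theta^r(\Omega)$ it follows from the Navier conditions $(-\Delta)^kf=0$ on $\partial\Omega$, which allow one to integrate by parts freely and write $\|f\|_r^2=\int_\Omega f(-\Delta)^rf$, a form controlling all lower order derivatives.

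I first fix a minimizing sequence $(u_n)$ for $S_{0,r}(\varphi)$, so that $\|u_n+\varphi\|_{L^{2^{*r}}}=1$ and $\|u_n\|_r^2\to S_{0,r}(\varphi)$. By coercivity, $(u_n)$ is bounded in $H^r(\Omega)$; after extracting a subsequence, $u_n\rightharpoonup u$ weakly in $H_0^r(\Omega)$, $u_n\to u$ strongly in $L^q(\Omega)$ for every $q<2^{*r}$ by Rellich–Kondrachov, and $u_n\to u$ almost everywhere. Put $v_n:=u_n-u\rightharpoonup 0$. The energy splits by weak convergence in the Hilbert space, $\|u_n\|_r^2=\|u\|_r^2+\|v_n\|_r^2+o(1)$, while the constraint splits by the Brezis–Lieb lemma, $1=\|u_n+\varphi\|_{L^{2^{*r}}}^{2^{*r}}=\|u+\varphi\|_{L^{2^{*r}}}^{2^{*r}}+\|v_n\|_{L^{2^{*r}}}^{2^{*r}}+o(1)$. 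Writing $\mu:=\lim_n\|v_n\|_{L^{2^{*r}}}^{2^{*r}}$ and $\nu:=\lim_n\|v_n\|_r^2$ along a further subsequence, we obtain $\|u+\varphi\|_{L^{2^{*r}}}^{2^{*r}}=1-\mu$, $S_{0,r}(\varphi)=\|u\|_r^2+\nu$, and, from the Sobolev inequality applied to $v_n$, $\nu\ge S\,\mu^{2/2^{*r}}$, where $S$ denotes the best Sobolev constant of $\mathcal D^{r,2}(\R^N)\hookrightarrow L^{2^{*r}}(\R^N)$.

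The heart of the matter — and the step I expect to be the main obstacle — is to show that the escaping mass $\mu$ vanishes. Denoting $\sigma(b):=\inf\{\|w\|_r^2:\ w\in H_0^r(\Omega),\ \|w+\varphi\|_{L^{2^{*r}}}^{2^{*r}}=b\}$, so that $\sigma(1)=S_{0,r}(\varphi)$, a bubble-insertion construction gives the sub-additivity $\sigma(1)\le\sigma(1-\mu)+S\mu^{2/2^{*r}}$, while admissibility of $u$ for the value $1-\mu$ together with $\nu\ge S\mu^{2/2^{*r}}$ gives the reverse inequality; hence equality holds unless the sub-additivity is strict. I would therefore prove the strict inequality $\sigma(1)<\sigma(1-\mu)+S\mu^{2/2^{*r}}$ for every $\mu\in(0,1]$, which forces $\mu=0$. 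This is where $\varphi\not\equiv 0$ is essential and where the contrast with $\varphi\equiv 0$ (for which $S_{0,r}(0)=S$ is never attained) appears: inserting a rescaled polyharmonic instanton — the extremal of $S$ for $(-\Delta)^r$ on $\R^N$ — concentrated at a point where $\varphi$ is continuous and nonzero, one checks that the cross terms between the concentrating bubble and the fixed function $\varphi$ are of strictly lower order, so that using $\varphi$ is strictly cheaper than a pure bubble. The delicate points are the decay and orthogonality estimates for the polyharmonic bubble against $\varphi$ and, for $H_0^r(\Omega)$, the truncation needed to enforce the Dirichlet data $D^kf=0$ without spoiling these estimates; for $H_\theta^r(\Omega)$ the Navier instanton is more directly admissible.

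Granting $\mu=0$, the decomposition yields $\|u+\varphi\|_{L^{2^{*r}}}^{2^{*r}}=1$, so $u$ is admissible, and weak lower semicontinuity gives $\|u\|_r^2\le\liminf_n\|u_n\|_r^2=S_{0,r}(\varphi)$; the definition of the infimum forces equality, so $u$ attains $S_{0,r}(\varphi)$. The proof for $S_{\theta,r}(\varphi)$ is identical, with $H_0^r(\Omega)$ replaced by $H_\theta^r(\Omega)$ and the Dirichlet instanton by the Navier one throughout. Compared with the second-order and biharmonic cases of \cite{BN2,BH,GHP}, the only genuinely new work is the bookkeeping of the polyharmonic bubble estimates and the coercivity of $\|\cdot\|_r^2$ on $H_\theta^r(\Omega)$ for general $r$.
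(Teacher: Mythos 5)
Your outline follows essentially the same route as the paper: extract a weak limit $u$ from a minimizing sequence, split the energy by Hilbert-space orthogonality and the constraint by Brezis--Lieb, bound the escaping part by the Sobolev inequality, and kill the escaping mass $\mu$ by a bubble-insertion argument whose strict gain comes from the presence of $\varphi$. Your ``strict sub-additivity of $\sigma$'' is exactly the paper's contradiction between the equality $S_{\theta,r}(\varphi)-\|u\|_{r}^{2}=S_{r}\bigl[1-\|u+\varphi\|_{L^{2^{*r}}}^{2^{*r}}\bigr]^{2/2^{*r}}$ (its Steps 1--2) and the strict inequality of its Step 4.

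There is, however, one concrete gap beyond the deferred expansion. You propose to concentrate the bubble ``at a point where $\varphi$ is continuous and nonzero,'' but the term that produces the strict gain is
$c_{\varepsilon}^{2^{*r}-1}\int_{\Omega}|u_{x_{0},\varepsilon}|^{2^{*r}-1}(u+\varphi)\,dx \sim D\,\varepsilon^{\frac{N-2r}{2}}(u+\varphi)(x_{0})$,
so the relevant condition is $(u+\varphi)(x_{0})\neq 0$, not $\varphi(x_{0})\neq 0$; at a point where $u=-\varphi$ the leading cross term vanishes and no strictness is obtained. Consequently you must rule out the degenerate case $u+\varphi\equiv 0$ (equivalently $\mu=1$ in your notation), and this does not follow from $\varphi\not\equiv 0$ alone: a priori the weak limit could be $u=-\varphi$. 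The paper handles this by first deriving the Euler--Lagrange equation satisfied by the weak limit under the contradiction hypothesis $\|u+\varphi\|_{L^{2^{*r}}}<1$ (its Step 3, obtained by differentiating the variational inequality of Step 2 along $u+tv$); plugging $u+\varphi\equiv 0$ into that equation forces $\|u\|_{r}=0$, hence $u=0$ and $\varphi=0$, a contradiction. Your sketch omits this step entirely, and without it the choice of concentration point and the case $\mu=1$ are both unjustified. The Euler--Lagrange identity is also what lets the paper control the cross term $\int(-\Delta)^{r/2}u\,(-\Delta)^{r/2}u_{x_{0},\varepsilon}\,dx$ in the expansion (it converts it into an integral against $|u+\varphi|^{2^{*r}-2}(u+\varphi)u_{x_{0},\varepsilon}$, which can be expanded explicitly), so you will likely need it anyway to carry out the ``bookkeeping'' you postpone. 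The rest of your plan (coercivity of $\|\cdot\|_{r}$ on both spaces, the cut-off instanton for the Dirichlet case, lower semicontinuity at the end) matches the paper.
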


\begin{theorem}~\\
Let $\Omega$ a regular bounded domain in $\R^{N}$ with $N\geq 2r+1$ and $\varphi\in L^{2^{*r}}\cap C(\Omega)\backslash \{0\}$. We have
\begin{itemize}
\item[(i)] If $\|\varphi\|_{L^{2^{*r}}}<1$ and $\varphi$ has a constant sign on $\Omega$, then every minimizer of  $S_{\theta,r}(\varphi)$ is not in $H^{r}_{0}(\Omega)$ and we have $S_{\theta,r}(\varphi)< S_{0,r}(\varphi)$.
\item[(ii)] If $\varphi \in (H^{r}_{0}(\Omega))^{\perp}$, where $(H^{r}_{0}(\Omega))^{\perp}$ is the orthogonal space of $H^{r}_{0}(\Omega)$  in $H^{r}_{\theta}(\Omega)$, then every minimizer of $S_{\theta,r}(\varphi)$ is not in $H^{r}_{0}(\Omega)$ and we have $S_{\theta,r}(\varphi)< S_{0,r}(\varphi)$.
\item[(iii)] If $\|\varphi\|_{L^{2^{*r}}}>1$ and $\varphi \in H^{r}_{0}(\Omega)$  then $S_{\theta,r}(\varphi)=S_{0,r}(\varphi)$.
\end{itemize}
\label{th2}
\end{theorem}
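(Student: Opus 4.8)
The plan is to exploit the Euler--Lagrange equation of the $S_{0,r}$-problem and to show that its minimizer $u_0$ fails to be a critical point of the $S_{\theta,r}$-problem on the larger space $H^r_\theta(\Omega)$. Since $H^r_0(\Omega)\subset H^r_\theta(\Omega)$ gives $S_{\theta,r}(\varphi)\le S_{0,r}(\varphi)=\|u_0\|_r^2$ for free, it suffices to exhibit a single admissible $u\in H^r_\theta(\Omega)$ with $\|u\|_r^2<\|u_0\|_r^2$ in order to obtain the strict inequality; once $S_{\theta,r}(\varphi)<S_{0,r}(\varphi)$ is known, any $\theta$-minimizer $u_\theta$ (which exists by \rth{th1}) cannot lie in $H^r_0(\Omega)$, for otherwise it would be admissible for $S_{0,r}$ with value $S_{\theta,r}(\varphi)<S_{0,r}(\varphi)$, a contradiction. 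Thus (i) and (ii) both reduce to producing one cheaper competitor in $H^r_\theta(\Omega)$.

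Write $F_0:=|u_0+\varphi|^{2^{*r}-2}(u_0+\varphi)$ and let $\langle\cdot,\cdot\rangle_r$ be the inner product attached to $\|\cdot\|_r$. The constraint $\|u+\varphi\|_{L^{2^{*r}}}=1$ is qualified because $F_0\not\equiv0$ (as $\|u_0+\varphi\|_{L^{2^{*r}}}=1$), so there is a multiplier $\lambda_0$ with $\langle u_0,v\rangle_r=\lambda_0\int F_0 v$ for all $v\in H^r_0(\Omega)$; testing $v=u_0$ gives $S_{0,r}(\varphi)=\lambda_0\int F_0 u_0$, whence $\lambda_0\neq0$ and $\int F_0 u_0\neq0$ as soon as $S_{0,r}(\varphi)>0$. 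The engine of the proof is the two-parameter family $u(t,s)=u_0+t\psi+s\,u_0$ with $\psi\in(H^r_0(\Omega))^\perp$; solving $\|u(t,s)+\varphi\|_{L^{2^{*r}}}=1$ for $s=s(t)$ by the implicit function theorem and differentiating the energy at $t=0$, while using $\langle u_0,\psi\rangle_r=0$ and the Euler--Lagrange relation, yields the clean formula $\frac{d}{dt}\|u(t,s(t))\|_r^2\big|_{t=0}=-2\lambda_0\int F_0\psi$. Hence, as soon as (A) $\lambda_0\neq0$ and (B) there is $\psi\in(H^r_0(\Omega))^\perp$ with $\int F_0\psi\neq0$, one may fix the sign of $\psi$ so that this derivative is negative, producing $u(t,s(t))\in H^r_\theta(\Omega)$ of strictly smaller energy and therefore $S_{\theta,r}(\varphi)<S_{0,r}(\varphi)$.

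It remains to check (A) and (B). For (ii) I would take $\psi=\varphi\in(H^r_0(\Omega))^\perp$; from $\int F_0(u_0+\varphi)=1$ one gets $\int F_0\varphi=1-\int F_0 u_0$, and $S_{0,r}(\varphi)=\lambda_0\int F_0 u_0>0$ forces $\lambda_0\neq0$, so both conditions hold away from the borderline $\int F_0 u_0=1$, which has to be excluded separately (by selecting another $\psi\in(H^r_0(\Omega))^\perp$, using $F_0\not\equiv0$). For (i), the constant sign of $\varphi$ together with $\|\varphi\|_{L^{2^{*r}}}<1$ is exactly what pins the signs down: if $w_0:=u_0+\varphi$ carries the sign of $\varphi$, then $F_0$ has constant sign and $\int F_0 u_0=1-\int|w_0|^{2^{*r}-1}|\varphi|\ge 1-\|w_0\|_{L^{2^{*r}}}^{2^{*r}-1}\|\varphi\|_{L^{2^{*r}}}>0$ by H\"older and $\|\varphi\|_{L^{2^{*r}}}<1$, giving (A) with $\lambda_0>0$; and choosing $\psi\in(H^r_0(\Omega))^\perp$ positive --- which exists by the positivity-preserving property of the iterated Dirichlet Laplacian underlying the Navier conditions --- yields $\int F_0\psi\neq0$, i.e.\ (B). The main obstacle I foresee is controlling the sign of $w_0=u_0+\varphi$: for $r\ge2$ one cannot simply replace $u_0+\varphi$ by $|u_0+\varphi|$ inside $H^r_0(\Omega)$, so establishing that the $S_{0,r}$-minimizer inherits the sign of $\varphi$ (via a maximum-principle/positivity argument for the polyharmonic problem, or as a by-product of \rth{th1}) is the delicate step on which (i) rests.

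For the equality (iii), the inclusion already gives $S_{\theta,r}(\varphi)\le S_{0,r}(\varphi)$, so the task is the reverse inequality. Using $\varphi\in H^r_0(\Omega)$ I would first set $w:=u+\varphi$ and rewrite the two problems as $S_{0,r}(\varphi)=\inf\{\|w-\varphi\|_r^2:\ w\in H^r_0(\Omega),\ \|w\|_{L^{2^{*r}}}=1\}$ and $S_{\theta,r}(\varphi)=\inf\{\|w-\varphi\|_r^2:\ w\in H^r_\theta(\Omega),\ \|w\|_{L^{2^{*r}}}=1\}$, and then use the orthogonal splitting $w=w^0+w^\perp$, for which $\|w-\varphi\|_r^2=\|w^0-\varphi\|_r^2+\|w^\perp\|_r^2$ because $\varphi\perp(H^r_0(\Omega))^\perp$. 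The goal is to show that, when $\|\varphi\|_{L^{2^{*r}}}>1$, switching on the component $w^\perp\in(H^r_0(\Omega))^\perp$ can never lower the objective once the constraint $\|w\|_{L^{2^{*r}}}=1$ is restored by the radial rescaling $w\mapsto w^0/\|w^0\|_{L^{2^{*r}}}\in H^r_0(\Omega)$. Quantifying this trade-off --- proving that the energy penalty $\|w^\perp\|_r^2$ always dominates the gain obtained from relaxing the constraint, and that this domination is precisely what the largeness condition $\|\varphi\|_{L^{2^{*r}}}>1$ guarantees --- is the crux of part (iii) and the second genuine obstacle of the theorem.
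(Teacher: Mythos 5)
Your variational engine is internally consistent: with $u(t,s)=(1+s)u_0+t\psi$, $\langle u_0,\psi\rangle_r=0$ and the Euler--Lagrange relation one does get $\frac{d}{dt}\|u(t,s(t))\|_r^2\big|_{t=0}=-2\lambda_0\int F_0\psi$, and the logical reduction (one cheaper competitor in $H^r_\theta$ gives the strict inequality, which in turn forces every $\theta$-minimizer out of $H^r_0$) is correct. The gaps are in conditions (A)--(B). For (i), your argument needs $F_0=|u_0+\varphi|^{2^{*r}-2}(u_0+\varphi)$ to have constant sign, i.e.\ $u_0+\varphi$ to inherit the sign of $\varphi$. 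You flag this as the delicate step, but it is worse than delicate: it is a positivity-preservation statement for the \emph{Dirichlet} polyharmonic operator $(-\Delta)^r$, $r\ge2$, which fails in general smooth bounded domains (Boggio-type principles hold only in balls and their small perturbations), so this route cannot be closed as stated. The paper avoids the issue entirely by arguing on the Navier side: assuming a $\theta$-minimizer $u_\theta$ lies in $H^r_0(\Omega)$, it constructs $v$ with $(-\Delta)^{\alpha}v=|(-\Delta)^{\alpha}u_\theta|$ and Navier conditions, obtains $v>|u_\theta|$ by iterating the \emph{second-order} maximum principle (available for Navier data), deduces $\|v+\varphi\|_{L^{2^{*r}}}>1$ from $\varphi\ge0$, and contradicts minimality after rescaling $v$ by some $s\in(0,1)$; no sign information on $u_\theta+\varphi$ is needed. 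For (ii), your choice $\psi=\varphi$ leaves the borderline $\int F_0u_0=1$ open, and the fallback ``select another $\psi$ using $F_0\not\equiv0$'' is unsubstantiated: $F_0\not\equiv0$ does not prevent $F_0$ from being $L^2$-orthogonal to all of $(H^r_0(\Omega))^{\perp}$. The paper instead tests the Euler equation of $u_\theta$ against $u_\theta+\varphi$ to get $\langle u_\theta,\varphi\rangle_r=\Lambda_\theta-S_{\theta,r}(\varphi)$ and shows the right-hand side is nonzero using the sign of the multiplier (\rprop{rm2}) in the two regimes $\|\varphi\|_{L^{2^{*r}}}\gtrless1$; orthogonality then expels $u_\theta$ from $H^r_0(\Omega)$.

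Part (iii) is not proved: you explicitly leave the ``crux'' open, and the proposed splitting-plus-rescaling comparison (energy penalty $\|w^{\perp}\|_r^2$ versus the gain from relaxing the constraint) is not an inequality one can expect to verify directly. The paper's mechanism is different and is the missing idea: when $\|\varphi\|_{L^{2^{*r}}}>1$ the constraint may be relaxed to the convex set $\{\|u+\varphi\|_{L^{2^{*r}}}\le1\}$ (which does not contain $0$), the minimization becomes a convex problem, and the Ekeland--Temam dual functional sees the admissible class only through $\sup\{\int\tilde p\,v:\ v\in X,\ \|v\|_{L^{2^{*r}}}\le1\}$, which equals $\|\tilde p\|_{L^{2N/(N+2r)}}$ for both $X=H^r_0(\Omega)$ and $X=H^r_\theta(\Omega)$ by density in the $L^{2^{*r}}$ topology. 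Hence the two dual problems, and therefore the two infima, coincide. Without some such convexity/duality input, the largeness hypothesis $\|\varphi\|_{L^{2^{*r}}}>1$ never enters your argument in a usable way.
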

\begin{remark}~\\
The proof of cases (i)-(ii) and (iii) are completely different, the last case is treated using the convexity of the problems, for more details see \cite{ET}.
\end{remark}

 The rest of paper is organized as follows: In section 2, we prove that the infimum in \eqref{eq1} and \eqref{eq2} are achieved where $\varphi\neq 0$ using some technical steps. In section 3, we present the proof of Theorem 2, more precisely we establish that $ S_{\theta,r}(\varphi) < S_{0,r}(\varphi), $ for $ \varphi$ satisfying suitable conditions.
 \section{Existence of minimizers}
In this section, we will prove Theorem \ref{th1}. This result is a natural generalization of the works \cite{BN2} and \cite{GHP}. \\
\textbf{Proof of Theorem \ref{th1}}.\\
We prove that $ S_{\theta,r} $ is achieved. The proof for $ S_{0,r} $ is similar. We follow an idea introduced in \cite{BN2} see also \cite{GHP}. \\
Let $\{u_{j}\}$ be a minimizing sequence for $S_{\theta,r}(\varphi)$, that is,
\begin{equation}
\|u_{j}+\varphi\|_{L^{2^{*r}}}=1
\label{m1}
\end{equation}
and
\begin{equation}
\|u_{j}\|_{r}^{2}=S_{\theta,r}(\varphi)+o(1).
\label{m2}
\end{equation}
An easy computations give that $\{u_{j}\}$ is bounded in $H_{\theta}^{r}(\Omega)$. Then, there exists s subsequence, still noted, $\{u_{j}\}$ such that
\begin{equation*}
\begin{array}{llll}
u_{j}\rightharpoonup u \quad\textrm{weakly in $H_{\theta}^{r}(\Omega)$},\\
u_{j}\rightarrow u \quad\textrm{strongly in $L^{t}(\Omega)$ for any $t< 2^{*r}$},\\
u_{j}\rightarrow u \quad\textrm{a.e on $\Omega$},\\
u_{j}\rightharpoonup u \quad \textrm{weakly in $L^{2^{*r}}(\Omega)$}.
\end{array}
\end{equation*}
Using the lower semi-continuity in (\ref{m1}) and (\ref{m2}), we obtain that
\begin{equation*}
\|u+\varphi\|_{L^{2^{*r}}}\leq 1,
\end{equation*}
and
\begin{equation}
\|u\|_{r}^{2}\leq S_{\theta,r}(\varphi).
\label{m3}
\end{equation}
In order to prove that $S_{\theta,r}(\varphi)$ is achieved by $u$, we need to establish $\|u+\varphi\|_{L^{2^{*r}}}= 1$. We proceed by contradiction, then we suppose that
\begin{equation}
\|u+\varphi\|_{L^{2^{*r}}}< 1 .
\label{contradiction1}
\end{equation}
We will prove the contradiction in four steps.
\begin{itemize}
\item{\textbf{Step 1}}~\\
We have
\begin{equation}
S_{\theta,r}(\varphi)-\|u\|_{r}^{2} \geq S_{r} \left[ 1-\int_{\Omega}|u+\varphi|^{2^{*r}}\right]^{\frac{2}{2^{*r}}}.
\label{eqq2}
\end{equation}
Indeed, let $v_{j}=u_{j}-u$. We have
\begin{equation*}
v_{j}\rightharpoonup 0 \quad \textrm{weakly in $H^{r}_{\theta}(\Omega)$}.
\end{equation*}
\begin{equation*}
v_{j}\rightarrow 0 \quad \textrm{a.e on $\Omega$}.
\end{equation*}
Looking in the definition of $S_{r}$ we write
\begin{equation}
\displaystyle \|v_{j}\|_{r}^{2}\geq S_{r}\|v_{j}\|_{L^{2^{*r}}}^{2}.
\label{prs11}
\end{equation}
From (\ref{m1}), we see that
\begin{equation*}
1=\|u+\varphi\|_{L^{2^{*r}}}^{2^{*r}}+\|v_{j}\|_{L^{2^{*r}}}^{2^{*r}}+o(1),
\end{equation*}
By Brezis-Lieb Lemma \cite{BL}, we have
\begin{equation}
\|v_{j}\|_{L^{2^{*r}}}^{2}=\left[1-\|u+\varphi\|_{L^{2^{*r}}}^{2^{*r}}\right]^{\frac{2}{2^{*r}}}+o(1),
\label{prs12}
\end{equation}
Inserting (\ref{prs12}) into (\ref{prs11}), we get
\begin{equation}
\displaystyle \|v_{j}\|_{r}^{2}\geq S_{r}\left[1-\|u+\varphi\|_{L^{2^{*r}}}^{2^{*r}}\right]^{\frac{2}{2^{*r}}}.
\label{prs13}
\end{equation}
On the other hand, from (\ref{m2}), we write
\begin{equation}
\|v_{j}\|_{r}^{2}=S_{\theta,r}(\varphi)-\|u\|_{r}^{2}+o(1).
\label{prs14}
\end{equation}
Inserting (\ref{prs13}) into (\ref{prs14}) we obtain (\ref{eqq2}).
\item{\textbf{Step 2}}~\\
Let $v \in H^{r}_{\theta}$ such that $\|v+ \varphi\|_{L^{2^{*r}}}\leq 1$, we have
\begin{equation}
S_{\theta,r}(\varphi)-\|v\|_{r}^{2} \leq S_{r} \left[ 1-\|v+ \varphi\|_{L^{2^{*r}}}^{2^{*r}}\right]^{\frac{2}{2^{*r}}},
\label{eq3}
\end{equation}
and thus
\begin{equation}
S_{\theta,r}(\varphi)-\|u\|_{r}^{2} = S_{r} \left[ 1-\|u+ \varphi\|_{L^{2^{*r}}}^{2^{*r}}\right]^{\frac{2}{2^{*r}}}.
\label{eq4}
\end{equation}
Indeed, let $v\in H^{r}_{\theta}(\Omega)$ such that $\|v+\varphi\|_{L^{2^{*r}}}\leq 1$. Suppose that $\|v+\varphi\|_{L^{2^{*r}}}< 1$, otherwise (\ref{eq3}) comes directly from the definition of $S_{\theta,r}$. There exists $c_{\epsilon}> 0$ such that $\|v+\varphi+c_{\varepsilon} u_{x_{0},\varepsilon}\|_{L^{2^{*r}}}=1 $ where $u_{x_{0},\varepsilon}$ is an extremal function associate to the best Sobolev constant $S_{r}$ defined by
\begin{equation} \label{D}
u_{x_{0},\varepsilon}(x)=\frac{\varepsilon^{\frac{N-2r}{2}} \xi}{(\varepsilon^{2}+|x-x_{0}|^{2})^{\frac{N-2 r}{2}}}.
\end{equation}
where $x_{0}\in \Omega$ and $\xi \in C_{0}^{\infty}(B(x_{0},\,R))$ be a fixed cut-off function satisfying $0\leq\xi\leq 1$ and $\xi\equiv 1$ on $B(x_{0},\,\frac{R}{2})$ with $R$ a positive constant.\\
We have from \cite{S},
\begin{equation*}
(-\Delta)^{j}u_{x_{0},\varepsilon}(t)=\frac{\varepsilon^{\frac{N-2r+4j}{2}}\sum_{i=0}^{j}G(i,j)t^{2 i}}{(\varepsilon^{2}+t^{2})^{\frac{N-2r+4 j}{2}}},\quad\textrm{for $j=1,2,\ldots, r$},
\end{equation*}
where
\begin{equation*}
G(i,j)=2^{i}(^{j}_{i}) K_{j} D(i, j) E(i, j),
\end{equation*}
with
\begin{equation*}
K_{j}=\displaystyle\Pi_{h=0}^{j-1}(N-2r+2h),
\end{equation*}
\begin{equation*}
D(i,j)=\left\{\begin{array}{lll}
1\quad \textrm{if $i=0$}\\[\medskipamount]
\displaystyle\Pi_{h=0}^{j-1}(r-h)\quad \textrm{if $i=1,2,\ldots, j$}
\end{array}
\right.
\end{equation*}
and
\begin{equation*}
E(i,j)=\left\{\begin{array}{lll}
\displaystyle\Pi_{h=0}^{j-1}(N+2h)\quad \textrm{if $i=0,1,\ldots, j-1$}\\[\medskipamount]
1 \quad\textrm{if $i=j$}\\[\medskipamount]
0\quad \textrm{if $i\geq j+1$}.
\end{array}
\right.
\end{equation*}

From \cite{EFJ}, we have

\begin{equation}
\|u_{x_{0},\varepsilon}\|_{L^{2^{*r}}}^{2}=\frac{K}{S_{r}}+O(\varepsilon^{N-2r}),
\label{eqs21}
\end{equation}
\begin{equation}
\| u_{x_{0},\varepsilon}\|_{r}^{2}=K+O(\varepsilon^{N-2r}),
\label{eqs22}
\end{equation}
\begin{equation}
u_{x_{0},\varepsilon}\rightharpoonup 0 \quad\textrm{in $H^{r}(\Omega)$},
\label{eqs23}
\end{equation}
where $K$ is a positive constant.\\
Now, we have $\|v+\varphi+c_{\varepsilon} u_{x_{0},\varepsilon}\|_{L^{2^{*r}}}=1$. Using Bresiz-Lieb Lemma, we write
\begin{equation*}
c_{\varepsilon}^{2^{*r}}\|u_{x_{0},\varepsilon}\|_{L^{2^{*r}}}^{2^{*r}}=1-\|v+\varphi\|_{L^{2^{*r}}}^{2^{*r}}+o(1),
\end{equation*}
Therefore
\begin{equation}
c_{\varepsilon}^{2}=\frac{S_{r}}{K}\left[1-\|v+\varphi\|_{L^{2^{*r}}}^{2^{*r}}\right]^{\frac{2}{2^{*r}}}+o(1).
\label{eqs24}
\end{equation}
On the other hand, we have\\
\textbf{If $r$ is even}
\begin{equation}
\begin{array}{llll}
S_{\theta,r}(\varphi)&\leq& \|v+c_{\varepsilon} u_{x_{0},\varepsilon}\|_{r}^{2}\\[\medskipamount]
&\leq& \|v\|_{r}^{2}+c_{\varepsilon}^{2}\|u_{x_{0},\varepsilon}\|_{r}^{2}+2 c_{\varepsilon}\displaystyle{\int_{\Omega}(\Delta)^{\frac{r}{2}} v (\Delta)^{\frac{r}{2}} u_{x_{0},\varepsilon}dx}\\[\medskipamount]
&\leq& \|v\|_{r}^{2}+c_{\varepsilon}^{2} K+2 \varepsilon^{\frac{N}{2}}c_{\varepsilon} \displaystyle{\int_{\Omega}(\Delta)^{\frac{r}{2}} v \frac{\sum_{i=0}^{\frac{r}{2}}G(i,\frac{r}{2})t^{2 i}}{(\varepsilon^{2}+t^{2})^{\frac{N}{2}}}dx}+o(\varepsilon^{\frac{N-2r}{2}}).
\end{array}
\label{eqs25}
\end{equation}
\textbf{If $r$ is odd}
\begin{equation}
\begin{array}{llll}
S_{\theta,r}(\varphi)&\leq& \|v+c_{\varepsilon} u_{x_{0},\varepsilon}\|_{r}^{2}\\[\medskipamount]
&\leq& \|v\|_{r}^{2}+c_{\varepsilon}^{2}\|u_{x_{0},\varepsilon}\|_{r}^{2}+2 c_{\varepsilon}\displaystyle{\int_{\Omega} \vert \nabla(-\Delta)^{\frac{r-1}{2}} v \vert \vert \nabla(-\Delta)^{\frac{r-1}{2}} u_{x_{0},\varepsilon} \vert dx}\\[\medskipamount]
&\leq& \|v\|_{r}^{2}+c_{\varepsilon}^{2} K+2 \varepsilon^{\frac{N-2r +4j}{2}}c_{\varepsilon} \displaystyle{ \int_{\Omega} \vert \nabla(-\Delta)^{\frac{r-1}{2}} v \vert \left[ \frac{\sum_{i=0}^{j}G(i,j)t^{2 i}}{(\varepsilon^{2}+t^{2})^{\frac{N-2}{2}}} - \frac{t^2}{2} \frac{(N-2)}{(\varepsilon^{2}+t^{2})^{N-2}}\right] dx}\\
&+&o(\varepsilon^{\frac{N-2r}{2}}).
\end{array}
\label{eqs26}
\end{equation}
In the two cases of $r$, using (\ref{eqs22}) and (\ref{eqs24}), the inequality (\ref{eqs25}) or (\ref{eqs26}) becomes
\begin{equation*}
S_{\theta,r}(\varphi)\leq \|v\|_{r}^{2}+S_{r}\left[1-\|v + \varphi\|_{L^{2^{*r}}}^{2^{*r}}\right]^{\frac{2}{2^{*r}}}+o(\varepsilon^{\frac{N-2r}{2}}).
\end{equation*}
Therefore we deduce (\ref{eq3}). Also, replace $v$ par $u$ in (\ref{eq3}) and using step 1 we get (\ref{eq4}).
\item{\textbf{Step 3}}~\\
According to assumption $(\ref{contradiction1})$: \\
If $r$ is even then
\begin{equation}
\displaystyle{\int_{\Omega}(\Delta)^{r/2}u(\Delta)^{r/2} v dx} =
S_{r} \left[ 1-\|u+\varphi\|_{L^{2^{*r}}}^{2^{*r}}\right]^{\frac{2}{2^{*r}}-1}\displaystyle{\int_{\Omega}|u+\varphi|^{2^{*r}-2}(u+\varphi)v dx}
\label{eq5}
\end{equation}
for every $v\in H^{r}_{\theta}(\Omega)$.\\
If $r$ is odd then
\begin{equation}
\displaystyle{ \int_{\Omega}\nabla(-\Delta)^{\frac{r-1}{2}}u \, \nabla(-\Delta)^{\frac{r-1}{2}} v dx }= S_{r} \left[ 1-\|u+ \varphi\|_{L^{2^{*r}}}^{2^{*r}}\right]^{\frac{2}{2^{*r}}-1} \displaystyle{ \int_{\Omega}|u+\varphi|^{2^{*r}-2}(u+\varphi) v dx}
\label{eq*}
\end{equation}
for every $v\in H^{r}_{\theta}(\Omega)$.\\
Indeed, let $v \in H^{r}_{\theta}(\Omega)$. Since $\|u+\varphi\|_{L^{2^{*r}}}< 1$, there exists $t_{0}> 0$ such that for all $|t|< t_{0}$ we have
\begin{equation*}
\|u+\varphi+ t v\|_{L^{2^{*r}}}<1.
\end{equation*}
Therefore, from Step 2, we have
\begin{equation*}
S_{\theta,r}(\varphi)-\|u+t v\|_{r}^{2} \leq S_{r} \left[ 1-\|u+ t v+ \varphi\|_{L^{2^{*r}}}^{2^{*r}}\right]^{\frac{2}{2^{*r}}}.
\end{equation*}
At this stage, we distinguish two cases:\\
\textbf{If $r$ is even } then
\begin{equation*}
S_{\theta,r}(\varphi)-\|u\|_{r}^{2}-2 t \displaystyle{\int_{\Omega}(\Delta)^{\frac{r}{2}} u (\Delta)^{\frac{r}{2}} vdx+o(t)\leq S_{r}
\left[ 1-\|u+ t v+ \varphi\|_{L^{2^{*r}}}^{2^{*r}}\right]^{\frac{2}{2^{*r}}}},
\end{equation*}
some computations give
\begin{equation*}
\begin{array}{lll}
S_{\theta,r}(\varphi)-\|u\|_{r}^{2}-2 t \displaystyle{\int_{\Omega}(\Delta)^{\frac{r}{2}} u (\Delta)^{\frac{r}{2}} vdx+o(t)\leq S_{r}
\left[ 1-\|u+ \varphi\|_{L^{2^{*r}}}^{2^{*r}}\right]^{\frac{2}{2^{*r}}}}\\[\medskipamount]
\displaystyle\times \left[1-2 t (1-\|u+ \varphi\|_{L^{2^{*r}}}^{2^{*r}})^{-1}\int_{\Omega}|u+\varphi|^{2^{*r}-2}(u+\varphi) v dx+o(t) \right].
\end{array}
\end{equation*}
Using (\ref{eq4}), we obtain
\begin{equation*}
\begin{array}{lll}
-2 t \displaystyle{\int_{\Omega}(\Delta)^{\frac{r}{2}} u (\Delta)^{\frac{r}{2}} v dx}+o(t)&\leq& -2 t S_{r} (1-\|u+ \varphi\|_{L^{2^{*r}}}^{2^{*r}})^{\frac{2}{2^{*r}}-1}\displaystyle\int_{\Omega}|u+\varphi|^{2^{*r}-2}(u+\varphi) v dx\\[\medskipamount]
&+&o(t).
\end{array}
\label{s31}
\end{equation*}
We deduce (\ref{eq5}) by letting $t$ goes to $0^{\pm}$.\\
\textbf{If $r$ is odd} then\\
Using again Step 2, we have
\begin{equation*}
S_{\theta,r}(\varphi)-\|u\|_{r}^{2}-2 t \displaystyle{ \int_{\Omega}\nabla(-\Delta)^{\frac{r-1}{2}} u \nabla(-\Delta)^{\frac{r-1}{2}} vdx}+o(t)\leq S_{r}
\left[ 1-\|u+ t v+ \varphi\|_{L^{2^{*r}}}^{2^{*r}}\right]^{\frac{2}{2^{*r}}},
\end{equation*}
some computations give
\begin{equation*}
\begin{array}{lll}
S_{\theta,r}(\varphi)-\|u\|_{r}^{2}-2 t \displaystyle{ \int_{\Omega}\nabla(-\Delta)^{\frac{r-1}{2}} u \nabla(-\Delta)^{\frac{r-1}{2}} vdx}+o(t)\leq S_{r}
\left[ 1-\|u+ \varphi\|_{L^{2^{*r}}}^{2^{*r}}\right]^{\frac{2}{2^{*r}}}\\[\medskipamount]
\times \left[1-2 t (1-\|u+ \varphi\|_{L^{2^{*r}}}^{2^{*r}})^{-1} \displaystyle{\int_{\Omega}|u+\varphi|^{2^{*r}-2}(u+\varphi) v dx}+o(t) \right].
\end{array}
\end{equation*}
Using (\ref{eq4}), we obtain
\begin{eqnarray*}
&-2 t \displaystyle{\int_{\Omega}\nabla(-\Delta)^{\frac{r-1}{2}} u \nabla(-\Delta)^{\frac{r-1}{2}} v dx}+o(t) \\
&\leq -2 t S_{r} (1-\|u+ \varphi\|_{L^{2^{*r}}}^{2^{*r}})^{\frac{2}{2^{*r}}-1}\displaystyle{\int_{\Omega}|u+\varphi|^{2^{*r}-2}(u+\varphi) v dx}+o(t).
\label{s32}
\end{eqnarray*}
We get (\ref{eq*}) by letting $t$ goes to $0^{\pm}$.\\
Now, we will show that the hypothesis $(\ref{contradiction1})$ is not true and leads to a contradiction with (\ref{eq4}).
\item{\textbf{Step 4}}~\\
The assumption $(\ref{contradiction1})$ implies that
\begin{equation}
S_{\theta,r}(\varphi)-\|u\|_{r}^{2} < S_{r}\displaystyle{ \left[ 1- \int_{\Omega}|u+\varphi|^{2^{*r}}\right]^{\frac{2}{2^{*r}}}}.
\label{eq6}
\end{equation}
Indeed, we have that $u+\varphi \not\equiv 0$, otherwise, from (\ref{eq5}) we obtain $\|u \|_{r}=0$ therefore $u=0$ and $\varphi=0$ which is false. Since we may replace $u$ by $-u$ and $\varphi$ by $-\varphi$, we may assume, without loss of generality, that $u+\varphi>0$ in a set $\Sigma$ of a positive measure in a ball $B(x_{0},\,\frac{R}{2})\subset \Omega$ with $R$ a positive constant. Then, let $x_{0} \in \Sigma$ such that $(u+\varphi)(x_{0})> 0$. \\
As in the proof of Step 2 there exists $c_{\varepsilon}>0$ such that $\displaystyle \|u+\varphi+c_{\varepsilon} u_{x_{0},\varepsilon}\|_{L^{2^{*r}}}=1$, where $c_{\varepsilon}$ is defined in (\ref{eqs24}).\\
We use $u+c_{\varepsilon} u_{x_{0},\varepsilon}$ as a testing function of \eqref{pb2} gives that\\
\textbf{If $r$ is even}
\begin{equation}
S_{\theta,r}(\varphi)\leq \|u\|_{r}^{2}+c_{\varepsilon}^{2}\displaystyle{\int_{\Omega} ((\Delta)^{\frac{r}{2}}u_{x_{0},\varepsilon})^{2}dx}+2 c_{\varepsilon} \displaystyle{\int_{\Omega} (\Delta)^{\frac{r}{2}} u_{x_{0},\varepsilon} (\Delta)^{\frac{r}{2}} u\, dx}.
\label{imen1}
\end{equation}
Let $\delta_{\varepsilon}$ and $c_{0}$ be given by
\begin{equation}
c_{\varepsilon}=c_{0}(1-\delta_{\varepsilon}),\qquad c_{0}^{2}=\frac{S_{r}}{K}[1-\|u+\varphi\|_{L^{2^{*r}}}^{{2^{*r}}}]^{\frac{2}{2^{*r}}}.
\label{eqmercredi}
\end{equation}
Therefore
\begin{equation}
[1-\|u+\varphi\|_{L^{2^{*r}}}^{L^{2^{*r}}}]^{-1}=c_{0}^{-2^{*r}}(\frac{S_{r}}{K})^{\frac{2^{*r}}{2}}.
\label{eqmercredi1}
\end{equation}
Using (\ref{eqs22}) and applying Step 3 with $v=u_{x_{0},\varepsilon}$, (\ref{imen1})
\begin{equation*}
\begin{array}{lll}
S_{\theta,r}(\varphi)- \|u\|_{r}^{2}&\leq& c_{0}^{2}(1-\delta_{\varepsilon})^{2}(K+O(\varepsilon^{N-2 r}))\\[\medskipamount]
&+&2 c_{\varepsilon}S_{r} \left[ 1-\|u+\varphi\|_{L^{2^{*r}}}^{2^{*r}}\right]^{\frac{2}{2^{*r}}-1}\displaystyle{\int_{\Omega}|u+\varphi|^{2^{*r}-2}(u+\varphi)u_{x_{0},\varepsilon} dx}.
\end{array}
\end{equation*}
Using (\ref{eqs24}) we obtain
\begin{equation}
\begin{array}{lll}
S_{\theta,r}(\varphi)- \|u\|_{r}^{2}&\leq& c_{0}^{2}(1-\delta_{\varepsilon})^{2}(K+O(\varepsilon^{N-2r}))\\[\medskipamount]
&+&2 c_{\varepsilon}c_{0}^{2}K \left[ 1-\|u+\varphi\|_{L^{2^{*r}}}^{2^{*r}}\right]^{-1}\displaystyle{\int_{\Omega}|u+\varphi|^{2^{*r}-2}(u+\varphi)u_{x_{0},\varepsilon} dx}.
\end{array}
\label{eqmercredi2}
\end{equation}
Using (\ref{eqmercredi}) we write
\begin{equation}
\begin{array}{lll}
S_{\theta,r}(\varphi)- \|u\|_{r}^{2}&\leq& S_{r}[1-\|u+\varphi\|_{L^{2^{*r}}}^{{2^{*r}}}]^{\frac{2}{2^{*r}}}(1-\delta_{\varepsilon})^{2}(1+O(\varepsilon^{N-2r}))\\[\medskipamount]
&+&2 c_{\varepsilon}c_{0}^{2}K \left[ 1-\|u+\varphi\|_{L^{2^{*r}}}^{2^{*r}}\right]^{-1}\displaystyle{\int_{\Omega}|u+\varphi|^{2^{*r}-2}(u+\varphi)u_{x_{0},\varepsilon} dx.}
\end{array}
\label{eqs41}
\end{equation}
We distinguish two cases:\\
\textbf{If $2^{*r}\geq 3$} we apply the following inequality
$$(x+y)^{p}-x^{p}-y^{p}-px^{p-1}y-px y^{p-1}\geq 0,\quad x,y\geq 0,\quad p\geq 3.$$
For $x=u+\varphi$ and $y=c_{\varepsilon} u_{x_{0},\varepsilon}$, using (\ref{eqs21}) and (\ref{eqmercredi}) we write
\begin{equation*}
\begin{array}{llll}
c_{\varepsilon}\displaystyle{ \int_{\Omega}|u+\varphi|^{2^{*r}-2}(u+\varphi) u_{x_{0},\varepsilon}dx} &\leq& \frac{1}{2^{*r}}[1-\|u+\varphi\|_{L^{2^{*r}}}^{2^{*r}}-c_{\varepsilon}^{2^{*r}}\|u_{x_{0},\varepsilon}\|_{2^{*r}}^{2^{*r}}] \\[\medskipamount]
&-&\displaystyle{c_{\varepsilon}^{2^{*r}-1}\int_{\Omega}|u_{x_{0},\varepsilon}|^{2^{*r}-1}(u+\varphi) dx}\\[\medskipamount]
&\leq&\frac{1}{2^{*r}} c_{0}^{2^{*r}}(\frac{K}{S_{r}})^{\frac{2^{*r}}{2}}\\[\medskipamount]
&-&\frac{1}{2^{*r}}c_{0}^{2^{*r}}(1-\delta_{\varepsilon})^{2^{*r}}
\left((\frac{K}{S_r})^{\frac{2^{*r}}{2}}+ O(\varepsilon^{N})\right)\\[\medskipamount]
&-&\displaystyle c_{\varepsilon}^{2^{*r}-1}\int_{\Omega}|u_{x_{0},\varepsilon}|^{2^{*r}-1}(u+\varphi) dx\\[\medskipamount]
\end{array}
\end{equation*}
On the other hand, a easy computation gives
\begin{equation}
\displaystyle{\int_{\Omega}|u_{x_{0},\varepsilon}|^{2^{*r}-1}(u+\varphi)dx=D \varepsilon^{\frac{N-2r}{2}}(u+\varphi)(x_{0})+o(\varepsilon^{\frac{N-2r}{2}})}
\label{eqs42}
\end{equation}
where $D=\displaystyle \int_{\R^{N}}\frac{1}{(1+|y|^{2})^{\frac{N+2r}{2}}}$.\\
Then
\begin{equation}
\begin{array}{lll}
c_{\varepsilon}\displaystyle{ \int_{\Omega}|u+\varphi|^{2^{*r}-2}(u+\varphi) u_{x_{0},\varepsilon}dx} &\leq& c_{0}^{2^{*r}}(\frac{K}{S_{r}})^{\frac{2^{*r}}{2}}(\delta_{\varepsilon}- \frac{1}{2}(2^{*r}-1)\delta_{\varepsilon}^{2}
+o(\delta_{\varepsilon}^{2})+o(\varepsilon^{N}))\\[\medskipamount]
&-& \frac{1}{2^{*r}} c_{\varepsilon}^{2^{*r}-1}D\varepsilon^{\frac{N-2r}{2}}(u+\varphi)(x_0)+o(\varepsilon^{\frac{N-2r}{2}}).
\end{array}
\label{eqsamedi1}
\end{equation}
Inserting (\ref{eqsamedi1}) into (\ref{eqs41}), we get
\begin{equation*}
\begin{array}{lll}
S_{\theta,r}(\varphi)- \|u\|_{r}^{2}&\leq& c_{0}^{2}K (1-2\delta_{\varepsilon}+\delta_{\varepsilon}^{2})+O(\varepsilon^{N-2r})\\[\medskipamount]
&+&2c_{0}^{2} (\frac{K}{S_{r}})^{\frac{2^{*r}}{2}} K (1-\|u+\varphi\|_{L^{2^{*r}}}^{2^{*r}})^{-1} \left(\delta_{\varepsilon}-\frac{1}{2}(2^{*r}-1)\delta_{\varepsilon}^{2}+o(\delta_{\varepsilon}^{2})\right)+o(\varepsilon^{N})\\[\medskipamount]
&-& 2\, c_{\varepsilon}^{2^{*r}-1} c_{0}^{2} K(1-\|u+\varphi\|_{L^{2^{*r}}}^{2^{*r}})^{-1}D\varepsilon^{\frac{N-2r}{2}}(u+\varphi)(x_0)+o(\varepsilon^{\frac{N-2r}{2}}).
\end{array}
\end{equation*}
Then
\begin{equation*}
\begin{array}{lll}
S_{\theta,r}(\varphi)- \|u\|_{r}^{2}&\leq& c_{0}^{2}K-2c_{0}^{2}K \delta_{\varepsilon}+ c_{0}^{2}K \delta_{\varepsilon}^{2}+ o(\varepsilon^{\frac{N-2r}{2}})\\
[\medskipamount]
&+& 2c_{0}^{2^{*r}+2}(\frac{K}{S_{r}})^{\frac{2^{*r}}{2}} K (1-\|u+\varphi\|_{L^{2^{*r}}}^{2^{*r}})^{-1} \delta_{\varepsilon}\\[\medskipamount]
&-& (2^{*r}-1)c_{0}^{2^{*r}+2}(\frac{K}{S_{r}})^{\frac{2^{*r}}{2}} K (1-\|u+\varphi\|_{L^{2^{*r}}}^{2^{*r}})^{-1}  \delta_{\varepsilon}^{2}+o(\delta_{\varepsilon}^{2})\\
[\medskipamount]
&-& 2\, c_{\varepsilon}^{2^{*r}-1} c_{0}^{2}K(1-\|u+\varphi\|_{L^{2^{*r}}}^{{2^{*r}}})^{-1}D\varepsilon^{\frac{N-2r}{2}}(u+\varphi)(x_0)+o(\varepsilon^{\frac{N-2r}{2}}).
\end{array}
\end{equation*}
Using (\ref{eqmercredi1}), we get
\begin{equation*}
\begin{array}{lll}
S_{\theta,r}(\varphi)- \|u\|_{r}^{2}&\leq& c_{0}^{2}K-c_{0}^{2}K (2^{2^{*r}}-2)\delta_{\varepsilon}^{2}+o(\delta_{\varepsilon}^{2})\\
[\medskipamount]
&-& 2\, c_{\varepsilon} K(\frac{S_{r}}{K})^{\frac{2^{*r}}{2}} D\varepsilon^{\frac{N-2r}{2}}(u+\varphi)(x_0)+o(\varepsilon^{\frac{N-2r}{2}}).
\end{array}
\end{equation*}
Consequently
\begin{equation*}
S_{\theta,r}(\varphi)- \|u\|_{r}^{2}<c_{0}^{2}K=S_{r}\left[1-\|\|u+\varphi\|_{L^{2^{*r}}}^{L^{2^{*r}}} \right]^{\frac{2}{2^{*r}}}.
\end{equation*}
\textbf{If $2^{*r}\leq 3$:}\\
We use the following inequality see [\cite{BN2}, Lemma 4] and \cite{GHP},
\begin{equation}
\left||x+y|^{p}-|x|^{p}-|y|^{p}-p x y (|x|^{p-2}+|y|^{p-2}) \right|\leq \left\{\begin{array}{lll} C |x|^{p-1}|y|\quad \textrm{if $|x|\leq |y|$},\\[\smallskipamount]
C |x||y|^{p-1} \quad \textrm{if $|x|\geq |y|$},
\end{array}
\right.
\label{eqmercredi3}
\end{equation}
for $x,\,y \in \R$, where $C=C(p)$ a positive a constant.\\
Define
\begin{equation}
\begin{array}{lll}
A_{\varepsilon}&:=&\displaystyle{ 1-\int_{\Omega}|u+\varphi|^{2^{*r}}dx-c_{\varepsilon}^{2^{*r}}\int_{\Omega}|u_{x_{0},\varepsilon}|^{2^{*r}}dx-2^{*r} c_{\varepsilon}^{2^{*r}-1}\int_{\Omega}|u_{x_{0},\varepsilon}|^{2^{*r}-1}(u+\varphi)dx}\\[\medskipamount]
&-&\displaystyle 2^{*r}c_{\varepsilon} \int_{\Omega}|u+\varphi|^{2^{*r}-2}(u+\varphi)u_{x_{0},\varepsilon}dx.
\end{array}
\label{samedi1}
\end{equation}
Applying the inequality (\ref{eqmercredi3}) with $x=u+\varphi$, $y= c_{\varepsilon} u_{x_{0},\varepsilon}$ and we suppose that $x_0 = 0$ for simplicity, we write
\begin{equation*}
\begin{array}{lll}
|A_{\varepsilon}|&\leq&\displaystyle C \left\{c_{\varepsilon} \int_{\left\lbrace |u+\varphi|\leq c_{\varepsilon } u_{x_{0},\varepsilon} \right\rbrace}|u+\varphi |^{2^{*r}-1}u_{x_{0},\varepsilon}dx+c_{\varepsilon}^{2^{*r}-1}\int_{\left\lbrace |u+\varepsilon|\geq c_{\varepsilon} u_{x_{0},\varepsilon}\right\rbrace }|u+\varphi|u_{x_{0},\varepsilon}^{2^{*r}-1}dx\right\}\\[\bigskipamount]
|A_{\varepsilon}|&\leq& A_{\varepsilon}^{1}+A_{\varepsilon}^{2}.
\end{array}
\end{equation*}
On one hand, we have
\begin{equation*}
A_{\varepsilon}^{1}\leq \displaystyle C_{1} \varepsilon^{\frac{N-2r}{2}}\int_{0}^{c_{1}\varepsilon^{\frac{1}{2}}}\frac{z^{N-1}}{(\varepsilon^{2}+z^{2})^{\frac{N-2r}{2}}}dz\quad\textrm{since $\displaystyle \{|u+\varphi|\leq c_{\varepsilon} u_{x_{0},\varepsilon}\}\subset \{|x|\leq c_{1}\varepsilon^{\frac{1}{2}}\}$,}
\end{equation*}

and
\begin{equation*}
A_{\varepsilon}^{2}\leq \displaystyle C_{2} \varepsilon^{\frac{N+2r}{2}}\int_{c_{2}\varepsilon^{\frac{1}{2}}}^{c_{3}}\underline{}\frac{z^{N-1}}{(\varepsilon^{2}+z^{2})^{\frac{N+2r}{2}}}dz\quad\textrm{since $\displaystyle \{|u+\varphi|\geq c_{\varepsilon} u_{x_{0},\varepsilon}\}\subset \{c_{2}\varepsilon^{\frac{1}{2}} \leq |x|\leq c_{3}\}$,}
\end{equation*}
where $c_{1}$, $c_{2}$ and $c_{3}$ are some positive constants.\\
On the other hand, some computations give
\begin{equation*}
\begin{array}{lll}
 \displaystyle{\int_{0}^{c_{1}\varepsilon^{\frac{1}{2}}}\frac{z^{N-1}}{(\varepsilon^{2}+z^{2})^{\frac{N-2r}{2}}}dz}\leq
 \displaystyle{\int_{0}^{c_{1}\varepsilon^{\frac{1}{2}}}z^{2r-1}dz= \frac{1}{2r}c_1 \varepsilon^{r}=O(\varepsilon^{r})},

 \end{array}
\end{equation*}
and
\begin{equation*}
\displaystyle{\int_{c_{2}\varepsilon^{\frac{1}{2}}}^{c_{3}}\frac{z^{N-1}}{(\varepsilon^{2}+z^{2})^{\frac{N+2r}{2}}}dz}\leq \int_{c_{2}\varepsilon^{\frac{1}{2}}}^{c_{3}}z^{-2r-1}dz=-\frac{1}{2r}[c_3^{-2r}-(c_{2})^{-2r}\varepsilon^{-r}] =\varepsilon^{-r}( K_2-K_3 \varepsilon^{r} ) = O(\varepsilon^{-r}).
\end{equation*}
Therefore $\displaystyle A_{\varepsilon}^{1}=O(\varepsilon^{\frac{N}{2}})=o(\varepsilon^{\frac{N-2r}{2}})$ and $A_{\varepsilon}^{2}=O(\varepsilon^{\frac{N}{2}})=o(\varepsilon^{\frac{N-2r}{2}})$.
Thus
\begin{equation}
\displaystyle A_{\varepsilon}=o(\varepsilon^{\frac{N-2r}{2}}).
\label{eqs43}
\end{equation}
Combining (\ref{eqs21}), (\ref{samedi1}) and (\ref{eqs43}) we get
\begin{equation*}
\begin{array}{lll}
c_{\varepsilon}\displaystyle{ \int_{\Omega}|u+\varphi|^{{2^{*r}-2}}(u+\varphi)u_{x_{0},\varepsilon}dx}&=&\frac{1}{2^{*r}}\left[1-\|u+\varphi\|_{L^{2^{*r}}}^{2^{*r}}-c_{0}^{2^{*r}}(1-{2^{*r}}\delta_{\varepsilon})
\left(\frac{K}{S_{r}}\right)^{\frac{2^{*r}}{2}}+o(\varepsilon^{N-2r}) \right]\\[\medskipamount]
&-&\displaystyle{ c_{\varepsilon}^{2^{*r}-1}\displaystyle{\int_{\Omega}|u_{x_{0},\varepsilon}|^{2^{*r}-1}(u+\varphi)dx}+o(\delta_{\varepsilon})+o(\varepsilon^{\frac{N-2r}{2}})}.
\end{array}
\end{equation*}
Using (\ref{eqmercredi1}), an easy computation gives
\begin{equation}
\displaystyle{ c_{\varepsilon}\int_{\Omega}|u+\varphi|^{2^{*r}-2}(u+\varphi)u_{x_{0},\varepsilon}dx}=\delta_{\varepsilon}c_{0}^{2^{*r}}(\frac{K}{S_{r}})^{\frac{2^{*r}}{2}}- c_{0}^{2^{*r}-1}\displaystyle{ \int_{\Omega}|u_{x_{0},\varepsilon}|^{2^{*r}-1}(u+\varphi)dx} +o(\delta_{\varepsilon})+o(\varepsilon^{\frac{N-2r}{2}}).
\label{eqmercredi4}
\end{equation}
On the other way, we have
\begin{equation}
{c_{\varepsilon}\displaystyle{ \int_{\Omega}|u+\varphi|^{2^{*r}-2}(u+\varphi)u_{x_{0},\varepsilon}dx}= c_{\varepsilon} \varepsilon^{\frac{N-2r}{2}} \int_{\Omega} \frac{(u+\varphi)^{2^{*r}-1}(x) dx}{\vert x \vert^{N-2r} }+o(\varepsilon^{\frac{N-2r}{2}})= O(\varepsilon^{\frac{N-2r}{2}}) }
\label{eqs45}
\end{equation}
Putting (\ref{eqs42}) and (\ref{eqs45}) into (\ref{eqmercredi4}) we deduce
\begin{equation}
{\delta_{\varepsilon}=O(\varepsilon^{\frac{N-2r}{2}}).}
\label{eqs46}
\end{equation}
Now, returning to (\ref{eqmercredi2}) and using (\ref{eqmercredi4}), we write
\begin{equation*}
\begin{array}{lll}
S_{\theta,r}(\varphi)- \|u\|_{r}^{2}&\leq& c_{0}^{2}K-2\delta_{\varepsilon}c_{0}^{2}K+2 c_{0}^{2}K(1-\|u+\varphi\|_{2^{*r}}^{2^{*r}})^{-1}\\[\medskipamount]
&\times&\displaystyle\displaystyle \left[\delta_{\varepsilon}c_{0}^{2^{*r}}\left(\frac{K}{S_{r}}\right)^{\frac{2^{*r}}{2}}-c_{0}^{2^{*r}-1}\int_{\Omega}|u_{x_{0},\varepsilon}|^{2^{*r}-1}(u+\varphi)dx\right]
+o(\delta_{\varepsilon})+o(\varepsilon^{\frac{N-2r}{2}}).
\end{array}
\end{equation*}
From (\ref{eqs46}), we get
\begin{equation}
S_{\theta,r}(\varphi)- \|u\|_{r}^{2}\leq c_{0}^{2}K-2c_{0}K\left(\frac{S_{r}}{K}\right)^{\frac{2^{*r}}{2}}\displaystyle{\int_{\Omega}|u_{x_{0},\varepsilon}|^{2^{*r}-1}(u+\varphi)+o(\varepsilon^{\frac{N-2r}{2}})}.
\label{eqs47}
\end{equation}
Using (\ref{eqs42}) we write
\begin{equation*}
S_{\theta,r}(\varphi)- \|u\|_{r}^{2}\leq c_{0}^{2}K-2c_{0}K(\frac{S_{r}}{K})^{\frac{2^{*r}}{2}}D(u+\varphi)(x_{0})\varepsilon^{\frac{N-2r}{2}}+o(\varepsilon^{\frac{N-2r}{2}}),
\end{equation*}
Therefore
\begin{equation*}
S_{\theta,r}(\varphi)- \|u\|_{r}^{2}<S_{r}\left[1-\|u+\varphi\|_{2^{*r}}^{2^{*r}}\right]^{\frac{2}{2^{*r}}}.
\end{equation*}
\end{itemize}
\textbf{If $r$ is odd}\\
We use again $u+c_{\varepsilon} u_{x_{0},\varepsilon}$ as a testing function of \eqref{eq2}, we have
\begin{equation*}
S_{\theta,r}(\varphi)\leq \|u\|_{r}^{2}+c_{\varepsilon}^{2}\displaystyle{\int_{\Omega} \vert \nabla(-\Delta)^{\frac{r-1}{2}}u_{x_{0},\varepsilon}\vert^{2}dx}+2 c_{\varepsilon} \displaystyle{\int_{\Omega} \vert\nabla(-\Delta)^{\frac{r-1}{2}} u_{x_{0},\varepsilon}\vert \vert \nabla(-\Delta)^{\frac{r-1}{2}} u\vert dx.}
\end{equation*}
Using (\ref{eq*}) and applying the same technics used in the case where $r$ is even, we obtain
 \begin{equation*}
S_{\theta,r}(\varphi)- \|u\|_{r}^{2}<S_{r}\left[1-\|u+\varphi\|_{2^{*r}}^{2^{*r}}\right]^{\frac{2}{2^{*r}}}.
\end{equation*}
Note that if instead of $(u+\varphi)(x_{0})>0$ we had $(u+\varphi)(x_{0})<0$, then we would choose $c_{\varepsilon}>0$ such that $\|u+\varphi-c_{\varepsilon}u_{x_{0},\varepsilon}\|_{L^{2^{*}}}=1$.
Which completes the proof of Step 4 and then the proof of Theorem \ref{th1} is done. \hfill $\square$ \\

\begin{remark}
Let us note that any minimizers $u_\theta \in H_{\theta}^{r}(\Omega)$ of $S_{\theta,r}$, respectively $u_0 \in H_{0}^{r}(\Omega)$ of $S_{0,r}$,
satisfy the following Euler-Lagrange equations:
\begin{equation} \label{BN}
 \left\{
 \begin{array}{lclll}
(-\Delta)^{r}u_{\theta} = \Lambda_{\theta} \vert u_{\theta}+ \varphi \vert^{2^{*r}-2}(u_{\theta}+ \varphi)  &\mbox{in} & \Omega, \\
  \Delta^{r-1} u_{\theta}=.....=\Delta u_{\theta}  = u_{\theta}= 0 &\mbox{on} & \partial \Omega, \\
 \end{array}
 \right.
\end{equation}

and
\begin{equation} \label{E1}
\left\{
\begin{array}{lclll}
(-\Delta)^{r}u_{0} = \Lambda_{0} \vert u_{0}+ \varphi \vert^{2^{*r}-2}(u_{0}+ \varphi)  &\mbox{in} & \Omega, \\
 \frac{\partial^{r-1} u_{0} }{{(\partial \nu)}^{r-1}} =....= \frac{\partial u_{0} }{{\partial \nu}}= u_{0}= 0 &\mbox{on} & \partial \Omega, \\
\end{array}
  \right.
\end{equation}
where $\Lambda_{\theta} $ is the Lagrange multiplier associated to $ u_{\theta}$ and $\Lambda_{0} $ is the Lagrange multiplier associated to $ u_{0}$.
\label{rm1}
\end{remark}
 By analogy of the case $r=2$ in \cite{HL}, we can find the sign of the Lagrange multipliers which depends on $ \|\varphi\|_{L^{2^{*r}}}$ and we have
 \begin{proposition}~\\
 \begin{itemize}
 \item[(a)] If $\|\varphi\|_{L^{2^{*r}}} < 1 $  then $\Lambda_{\theta} > 0. $ and  $\Lambda_{0} > 0. $\\
 \item[(b)] If $\|\varphi\|_{L^{2^{*r}}} > 1 $  then $\Lambda_{\theta} < 0. $ and $\Lambda_{0} < 0. $\\
 \end{itemize}
 \label{rm2}
 \end{proposition}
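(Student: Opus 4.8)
The plan is to read off the sign of $\Lambda_\theta$ (the argument for $\Lambda_0$ being identical, working in $H^r_0(\Omega)$ throughout) from a single integral identity obtained by testing the Euler--Lagrange equation \eqref{BN} against the minimizer itself. Writing $u=u_\theta$ and using $u\in H^r_\theta(\Omega)$ as a test function in the weak form of \eqref{BN} gives
\begin{equation*}
\|u\|_r^2=\Lambda_\theta\int_\Omega|u+\varphi|^{2^{*r}-2}(u+\varphi)\,u\,dx=:\Lambda_\theta\,I.
\end{equation*}
Since $\|\varphi\|_{L^{2^{*r}}}\neq1$ in both cases, $u\equiv0$ is not admissible, so the minimizer is nontrivial and $\|u\|_r^2=S_{\theta,r}(\varphi)>0$. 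Hence $I\neq0$ and $\mathrm{sign}(\Lambda_\theta)=\mathrm{sign}(I)$; the whole statement reduces to determining the sign of $I$. Using $\|u+\varphi\|_{L^{2^{*r}}}=1$ I rewrite
\begin{equation*}
I=\int_\Omega|u+\varphi|^{2^{*r}}dx-\int_\Omega|u+\varphi|^{2^{*r}-2}(u+\varphi)\varphi\,dx=1-J,\qquad J:=\int_\Omega|u+\varphi|^{2^{*r}-2}(u+\varphi)\varphi\,dx.
\end{equation*}

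For part (a) this is immediate: by H\"older's inequality with exponents $2^{*r}$ and $2^{*r}/(2^{*r}-1)$,
\begin{equation*}
|J|\le\Big(\int_\Omega|u+\varphi|^{2^{*r}}dx\Big)^{\frac{2^{*r}-1}{2^{*r}}}\|\varphi\|_{L^{2^{*r}}}=\|\varphi\|_{L^{2^{*r}}}<1,
\end{equation*}
so $I=1-J\ge1-\|\varphi\|_{L^{2^{*r}}}>0$, whence $\Lambda_\theta>0$.

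The main obstacle is part (b): H\"older only bounds $J$ from above, whereas $I<0$ requires the lower bound $J>1$, which cannot follow from H\"older alone. I would instead introduce the value function
\begin{equation*}
m(a):=\inf\Big\{\|v\|_r^2:\ v\in H^r_\theta(\Omega),\ \|v+\varphi\|_{L^{2^{*r}}}=a\Big\},\qquad a>0,
\end{equation*}
so that $m(1)=S_{\theta,r}(\varphi)$, while $m(a)=0$ exactly at $a=\|\varphi\|_{L^{2^{*r}}}$ (attained by $v=0$) and $m(a)>0$ otherwise. The key lemma is the strict monotonicity of $m$: it is strictly decreasing on $(0,\|\varphi\|_{L^{2^{*r}}}]$ and strictly increasing on $[\|\varphi\|_{L^{2^{*r}}},\infty)$. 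I would prove this by a scaling--comparison argument: given a minimizer $v_a$ at a level $a<\|\varphi\|_{L^{2^{*r}}}$, the map $s\mapsto\|s v_a+\varphi\|_{L^{2^{*r}}}$ is continuous, equals $\|\varphi\|_{L^{2^{*r}}}$ at $s=0$ and $a$ at $s=1$, so by the intermediate value theorem it attains any intermediate level $a'\in(a,\|\varphi\|_{L^{2^{*r}}}]$ at some $s^*\in[0,1)$, giving $m(a')\le (s^*)^2\|v_a\|_r^2<m(a)$; the increasing branch is symmetric. Existence of the minimizers $v_a$ needed here follows from Theorem~\ref{th1} applied to $\varphi/a$, since $m(a)=a^2S_{\theta,r}(\varphi/a)$ and $\varphi/a\not\equiv0$.

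With this lemma the sign of $I$ in case (b) follows by a one-parameter contradiction. Set $a(t):=\|tu+\varphi\|_{L^{2^{*r}}}$; differentiating $h(t):=\int_\Omega|tu+\varphi|^{2^{*r}}dx$ at $t=1$ (dominated convergence applies since $2^{*r}>1$) gives $h'(1)=2^{*r}I$ and hence $a'(1)=I$, while the scaling competitor $tu\in H^r_\theta(\Omega)$ yields $m\big(a(t)\big)\le t^2 S_{\theta,r}(\varphi)$ with equality at $t=1$. Suppose, for contradiction, that $I>0$. Then for $t=1-\e$ with $\e>0$ small one has $a(t)=1-I\e+o(\e)<1<\|\varphi\|_{L^{2^{*r}}}$, so strict monotonicity gives $m(a(t))>m(1)=S_{\theta,r}(\varphi)$; but the competitor bound gives $m(a(t))\le t^2S_{\theta,r}(\varphi)<S_{\theta,r}(\varphi)$, a contradiction. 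Thus $I\le0$, and since $I\neq0$ we conclude $I<0$ and $\Lambda_\theta<0$. The same contradiction, run with $t=1-\e$ and the increasing branch of $m$, reproves $\Lambda_\theta>0$ in case (a), so the monotonicity lemma could in fact drive both cases. I expect that lemma to be the only delicate point; everything else is bookkeeping.
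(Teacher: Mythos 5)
Your proof is correct and follows essentially the same route as the paper: part (a) is the identical test-the-equation-against-$u_\theta$ plus H\"older computation, and your value-function monotonicity lemma for part (b) is a repackaging of the paper's argument that $h(t)=\int_\Omega|tu_\theta+\varphi|^{2^{*r}}dx$ satisfies $h\geq 1$ on $[0,1]$, which the paper proves by the same scaling/intermediate-value/minimality contradiction, concluding $h'(1)=2^{*r}I\leq 0$ and hence $\Lambda_\theta<0$.
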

 \begin{proof}
 We shall prove the results for $\Lambda_{\theta}$, the proof of results for $\Lambda_{0}$ are similar.\\
 We begin by noticing that $ \Lambda_{\theta}$ can be written as:
 \begin{equation} \label{Re1}
  S_{\theta,r}= \Lambda_{\theta} \left[ 1- \int_{\Omega} \vert u_{\theta}+ \varphi \vert^{2^{*r}-2}(u_{\theta}+ \varphi)  \varphi \right].
  \end{equation}
 Indeed,
 we have (see Remark~{\ref{rm1}})
 $$ S_{\theta,r}= \Lambda_{\theta} \int_{\Omega} \vert u_{\theta}+ \varphi \vert^{2^{*r}-2}(u_{\theta}+ \varphi) u_\theta, $$
 and \\
 $\displaystyle{ \int_{\Omega} \vert u_{\theta}+ \varphi \vert^{2^{*r}-2}(u_{\theta}+ \varphi)(u_{\theta}+ \varphi) = \int_{\Omega} \vert u_{\theta}+ \varphi \vert^{2^{*r}-2}(u_{\theta}+ \varphi) u_\theta + \int_{\Omega} \vert u_{\theta}+ \varphi \vert^{2^{*r}-2}(u_{\theta}+ \varphi)  \varphi,} $ \\
 and since,
 $$ \int_{\Omega} \vert u_{\theta}+ \varphi \vert^{2^{*r}} = 1. $$
Therefore we deduce \eqref{Re1}. \\
 Then, if we suppose $\|\varphi\|_{L^{2^{*r}}} < 1, $ and by the H\"older inequality we have
  $$ \int_{\Omega} \vert u_{\theta}+ \varphi \vert^{2^{*r}-2}(u_{\theta}+ \varphi)  \varphi \leq \left[  \int_{\Omega} \left( \vert u_{\theta}+ \varphi \vert^{2^{*r}-1}\right)^{\frac{2^{*r}}{2^{*r}-1}} dx \right]^{\frac{2^{*r}-1}{2^{*r}}} \left[ \int_{\Omega} {\vert \varphi \vert}^{2^{*r}} dx \right]^{\frac{1}{2^{*r}}}. $$
 Since $u \not\equiv 0$ except for $ \|\varphi\|_{L^{2^{*r}}} = 1 $ which is an obvious case. \\
 Thus,
 $$ \int_{\Omega} \vert u_{\theta}+ \varphi \vert^{2^{*r}-2}(u_{\theta}+ \varphi)  \varphi \leq \left[ \int_{\Omega} {\vert \varphi \vert}^{2^{*r}} dx \right]^{\frac{1}{2^{*r}}} < 1 $$
 and then $ \Lambda_{\theta} > 0. $ Now we assume that $\|\varphi\|_{L^{2^{*r}}} > 1 $ and set, as in \cite{HL},
$$ h(t)= \int_{\Omega} \vert t u_{\theta}+ \varphi \vert^{2^{*r}}. $$
This function admits a derivative given by the formula
$$ h'(t)= 2^{*r} \int_{\Omega} \vert t u_{\theta}+ \varphi \vert^{2^{*r}-2} (tu_{\theta}+ \varphi) u_\theta. $$
Now, the function $u_\theta$ satisfies
$$ (-\Delta)^{r}u_{\theta} = \Lambda_{\theta} \vert u_{\theta}+ \varphi \vert^{2^{*r}-2}(u_{\theta}+ \varphi). $$
Then, multiplying  by $ u_{\theta} $ and integrating by parts, we get \\
\textbf{If $r$ is even}
$$ \int_{\Omega} {\vert (-\Delta)^{\frac{r}{2}} u_{\theta} \vert}^{2} dx= \Lambda_{\theta} \int_{\Omega} \vert u_{\theta}+ \varphi \vert^{2^{*r}-2} (u_{\theta}+ \varphi) u_\theta= \frac{\Lambda_{\theta}}{2^{*r}}h'(1). $$
\textbf{If $r$ is odd}
$$\displaystyle{ \int_{\Omega} |\nabla(-\Delta)^{\frac{r-1}{2}} u_{\theta} |^{2}dx} = \Lambda_{\theta} \int_{\Omega} \vert u_{\theta}+ \varphi \vert^{2^{*r}-2} (u_{\theta}+ \varphi) u_\theta= \frac{\Lambda_{\theta}}{2^{*r}}h'(1).$$
So item $(b)$ is verified because $h(1)=1, $ and we see that $h(t)\geq 1$ for all $t \in \left[ 0, 1 \right]. $ So, we conclude that $ h'(1)\leq 0. $ Otherwise, since $h$ is continuous and $ h(0) > 1, $ there exists $ 0<s<1$ such that $ h(s)=1. $\\
Therefore,
$$\int_{\Omega} \vert s u_{\theta}+ \varphi \vert^{2^{*r}} = 1. $$
When $s u_\theta $ as a testing function in(2), we have  \\
\textbf{If $r$ is even}
$$S_{\theta,r}= \displaystyle{ \int_{\Omega} |(-\Delta)^{r/2} s u_\theta |^{2}dx} \leq s^{r} \displaystyle{ \int_{\Omega} |(-\Delta)^{r/2} u_{\theta}  |^{2}dx}. $$
\textbf{If $r$ is odd}
$$S_{\theta,r}= \displaystyle{ \int_{\Omega} |\nabla(-\Delta)^{\frac{r-1}{2}} su_{\theta} |^{2}dx} \leq s^{r-1} \displaystyle{ \int_{\Omega} |\nabla(-\Delta)^{\frac{r-1}{2}} u_{\theta} |^{2}dx}. $$

We get a contradiction and the proof is completed.
 \end{proof}
 \begin{remark}
 In \cite{G}, the author considered the following semi-linear polyharmonic problem:
 \begin{equation} \label{G}
 \left\{
 \begin{array}{lclll}
(-\Delta)^{r}u =  \vert u \vert^{2^{*r}-2} u + f(x,u) &\mbox{in} & \Omega, \\
u> 0 & \mbox{in} & \Omega, \\
  (-\Delta)^{r-1} u =.....=(-\Delta) u  = u = 0 &\mbox{on} & \partial \Omega. \\
 \end{array}
 \right.
\end{equation}
This problem is equivalent to \eqref{BN} when $\Lambda_{\theta} > 0 $ is fixed. The author prove the existence of positive solutions under the sufficient conditions on $f$ and the domain $\Omega$.
\label{rm3}
 \end{remark}
\section{Proof of Theorem 2}
By definitions of \eqref{eq1} and \eqref{eq2} we have $ S_{\theta,r} \leq S_{0,r}. $ In this section we present a gap phenomenon between $ S_{\theta,r}$ and $S_{0,r}$ under suitable hypothesis on $ \varphi. $ \\
Proof of $(i). $\\
Let  $\varphi$ be a positive function not identically zero. We adapt the argument of Van der Vorst \cite{V2} to the present situation. Let $ u_{\theta}$ a the minimizer of \eqref{eq2}. We give reason by contradiction. We assume that $ u_{\theta}$ is in $ H_0^r(\Omega). $ \\
\textbf{If $r$ is even}\\
Let $v$ be the solution of the following problem
\begin{equation} \label{E3}
 \left\{
       \begin{array}{lclll}
 (-\Delta)^{\frac{r}{2}}v =  \vert  (-\Delta)^{\frac{r}{2}} u_{\theta} \vert  &\mbox{in} & \Omega, \\
  (-\Delta)^{\frac{r}{2}-1} v=.....= -\Delta v = v= 0 &\mbox{on} & \partial \Omega, \\
 \end{array}
    \right.
\end{equation}
We get
\begin{equation}
 \left\{
       \begin{array}{lclll}
 (-\Delta)(-\Delta)^{\frac{r}{2}-1}(v - u_{\theta}) \geq 0  &\mbox{in} & \Omega, \\
 (-\Delta)^{\frac{r}{2}-1}(v - u_{\theta})  = 0 &\mbox{on} & \partial \Omega,
 \end{array}
    \right.
    \label{E4}
\end{equation}
 and
\begin{equation}
 \left\{
       \begin{array}{lclll}
 (-\Delta)(-\Delta)^{\frac{r}{2}-1}(v + u_{\theta}) \geq 0  &\mbox{in} & \Omega, \\
(-\Delta)^{\frac{r}{2}-1}(v + u_{\theta}) = 0 &\mbox{on} & \partial \Omega.
 \end{array}
    \right.
    \label{E5}
\end{equation}
In equations \eqref{E4} and \eqref{E5}, using successively the maximum principle we obtain  $ v > \vert u_{\theta} \vert $ or $ v= - u_{\theta}$ or $ v= u_{\theta}. $ \\
By taking the equation \eqref{E3} with $ v= u_{\theta}$ and $ v= - u_{\theta}, $ we find the function $ (-\Delta)^{\frac{r}{2}} u_{\theta}$ has a constant sign. These two cases $ v= u_{\theta} $ or $- u_{\theta}$ when $ u_{\theta}= \frac{\partial  u_{\theta} }{\partial \nu}=...=\frac{\partial^{\frac{r}{2} -1} u_{\theta} }{(\partial \nu)^{\frac{r}{2} -1}}  = 0 \ \mbox{on} \ \partial \Omega $ are false if we use the maximum principle when we consider $ u_{\theta} = 0$ in $\Omega$. So we have $ v > \vert u_{\theta} \vert $ in $ \Omega. $ \\ Considering this inequality and the fact that $ \varphi \geq 0, $ we get $  u_{\theta} + \varphi < v + \varphi $ in $ \Omega$ and $ -u_{\theta} - \varphi < v + \varphi $ in $ \Omega; $ therefore $ \vert u_{\theta} + \varphi \vert < \vert v + \varphi \vert $ in $ \Omega $ and as result we have
$$\displaystyle \int_{\Omega}|v +\varphi|^{2^{*r}} dx > 1. $$
Currently, take the function $ f(t)= \displaystyle{ \int_{\Omega}|tv +\varphi|^{2^{*r}} dx} $ for $ t \in \left[ 0, 1 \right]. $ Since $f$ is continuous, $ f(0)< 1 $ and $ f(1)> 1, $ there exists $ s \in \left] 0, 1 \right[ $ such that $ f(s) =1. $\\
But we have
$$ \int_{\Omega} {\vert (- \Delta)^{\frac{r}{2}}s v \vert}^{2} dx \leq s^2 \int_{\Omega} {\vert (- \Delta)^{\frac{r}{2}} v \vert}^{2} dx,  $$
this gives a contradiction with the definition of $ S_{\theta,r}(\varphi). $ \\
\textbf{If $r$ is odd} \\
Let $v$ be the solution of the next problem:
\begin{equation} \label{E6}
 \left\{
       \begin{array}{lclll}
 (-\Delta)^{\frac{r-1}{2}}v =  \vert  (-\Delta)^{\frac{r-1}{2}} u_{\theta} \vert  &\mbox{in} & \Omega, \\
  (-\Delta)^{\frac{r-1}{2}-1} v=.....= -\Delta v = v= 0 &\mbox{on} & \partial \Omega, \\
 \end{array}
    \right.
\end{equation}
We obtain
\begin{equation} \label{E7}
 \left\{
       \begin{array}{lclll}
 (-\Delta)(-\Delta)^{\frac{r-1}{2}-1}(v - u_{\theta}) \geq 0  &\mbox{in} & \Omega, \\
 (-\Delta)^{\frac{r-1}{2}-1}(v - u_{\theta})  = 0 &\mbox{on} & \partial \Omega, \\
 \end{array}
    \right.
\end{equation}
 and
\begin{equation} \label{E8}
 \left\{
       \begin{array}{lclll}
 (-\Delta)(-\Delta)^{\frac{r-1}{2}-1}(v + u_{\theta}) \geq 0  &\mbox{in} & \Omega, \\
(-\Delta)^{\frac{r-1}{2}-1}(v + u_{\theta}) = 0 &\mbox{on} & \partial \Omega. \\
 \end{array}
    \right.
\end{equation}
In \eqref{E7} and \eqref{E8}, using successively the maximum principle we obtain  $ v > \vert u_{\theta} \vert $ or $ v= - u_{\theta}$ or $ v= u_{\theta}. $ \\
By taking the equation \eqref{E6} with $ v= u_{\theta}$ and $ v= - u_{\theta}, $ we find the function $ (-\Delta)^{\frac{r-1}{2}} u_{\theta}$ has a constant sign. These two cases $ v= u_{\theta} $ or $- u_{\theta}$ when $ u_{\theta}= \frac{\partial  u_{\theta} }{\partial \nu}=...=\frac{\partial^{\frac{r-1}{2} -1} u_{\theta} }{(\partial \nu)^{\frac{r-1}{2} -1}}  = 0 \ \mbox{on} \ \partial \Omega $ are false if we use the maximum principle when we consider $ u_{\theta} = 0$ in $\Omega$. Thus, we have $ v > \vert u_{\theta} \vert $ in $ \Omega. $ \\ Using this inequality and the fact that $ \varphi \geq 0, $ we get $  u_{\theta} + \varphi < v + \varphi $ in $ \Omega$ and $ -u_{\theta} - \varphi < v + \varphi $ in $ \Omega; $ therefore $ \vert u_{\theta} + \varphi \vert < \vert v + \varphi \vert $ in $ \Omega $ and as result we have
$$\displaystyle \int_{\Omega}|v +\varphi|^{2^{*r}} dx > 1. $$
Currently, let us consider the function $ f(t)= \displaystyle{ \int_{\Omega}|tv +\varphi|^{2^{*r}} dx} $ for $ t \in \left[ 0, 1 \right]. $ Since $f$ is continuous, $ f(0)< 1 $ and $ f(1)> 1, $ there exists $ s \in \left] 0, 1 \right[ $ such that $ f(s) =1. $\\
But we have
$$ \int_{\Omega} {\vert \nabla( (- \Delta)^{\frac{r-1}{2}}s v) \vert}^{2} dx \leq s^2 \int_{\Omega} {\vert (- \Delta)^{\frac{r-1}{2}} v \vert}^{2} dx,  $$
that contradiction the definition of $ S_{\theta,r}(\varphi). $  \\
 This finish the proof of $(i)$. \\

\textbf{Proof of (ii).}
We will prove it into two cases. \\
\textbf{Case 1:} Assume that $ \varphi$ is in $(H^{r}_{0}(\Omega))^{\perp}$ and $\|\varphi\|_{L^{2^{*r}}} > 1$.  Let $ u_{\theta} $ a solution of \eqref{eq2}. Multiplying \eqref{BN} by $ u_{\theta}+ \varphi $ and integrating by parts, we get \\
\textbf{If $r$ is even}
$$ \int_{\Omega} {\vert (-\Delta)^{\frac{r}{2}} u_{\theta} \vert}^{2} + \int_{\Omega} (-\Delta)^{\frac{r}{2}}  u_{\theta} \cdot  (-\Delta)^{\frac{r}{2}} \varphi dx = \Lambda_{\theta}. $$

\textbf{If $r$ is odd} \\
We have
$$ \int_{\Omega} |\nabla(-\Delta)^{\frac{r-1}{2}} u_{\theta} |^{2}dx  + \int_{\Omega} \nabla(-\Delta)^{\frac{r-1}{2}} u_{\theta} \cdot  \nabla(-\Delta)^{\frac{r-1}{2}} \varphi dx = \Lambda_{\theta}. $$
As $\|\varphi\|_{L^{2^{*r}}} > 1,$ we have  $ \Lambda_{\theta} < 0, $ therefore $ \displaystyle{ \int_{\Omega} (-\Delta)^{\frac{r}{2}}  u_{\theta} \cdot  (-\Delta)^{\frac{r}{2}} \varphi dx < 0, } $ if $r$ is even and $\displaystyle{ \int_{\Omega} \nabla(-\Delta)^{\frac{r-1}{2}} u_{\theta} \cdot  \nabla(-\Delta)^{\frac{r-1}{2}} \varphi dx<0}$ if $r$ is odd. Which improve that $u_\theta $ is not in $ H^{r}_{0}(\Omega); $ in conclusion we have $ \displaystyle{S_{\theta,r}(\varphi) < S_{0,r}(\varphi). }$ \\
\textbf{Case 2:} Assume that $ \varphi$ is in $(H^{r}_{0}(\Omega))^{\perp}$ and $\|\varphi\|_{L^{2^{*r}}} < 1$. Let $x_{0} \in \Omega$ and $u_{x_{0},\varepsilon}$ defined in (\ref{D}). From \cite{EFJ}, we have $ \displaystyle{\int_{\Omega} {\vert u_{x_{0}, \varepsilon} \vert}^{2^{*r}} = \frac{K}{S_{r}}+ o(1), }$ and $ \displaystyle{\int_{\Omega} {\vert \Delta u_{x_{0}, \varepsilon} \vert}^{2} = K + o(1), }$ where  $K$ is a positive constant. \\
Since $\|\varphi\|_{L^{2^{*r}}} < 1,$ there exists $ c_\varepsilon > 0 $ such that $ \|\varphi + c_\varepsilon u_{x_{0}, \varepsilon} \|_{L^{2^{*r}}} = 1. $ By Brezis-Lieb identity (see \cite{BL} ) we have
$$ c_{\varepsilon}^{2^{*r}} = (\frac{S_{r}}{K})^{\frac{2^{*r}}{2}} \left[ 1 - \| \varphi \|_{L^{2^{*r}}}^{2^{*r}} \right]+o(1), $$
then
$$ c_{\varepsilon}^{2} K = S_{r} \left[ 1 -\| \varphi \|_{L^{2^{*r}}}^{2^{*r}} \right]^{\frac{N-2r}{N}}+o(1).   $$
when $o(1)$ tends to $ 0. $
At limit, we have
\begin{equation}
 c_{\varepsilon}^{2} K = S_{r} \left[ 1 -\| \varphi \|_{L^{2^{*r}}}^{2^{*r}} \right]^{\frac{N-2r}{N}}.
\label{eqdimanche1}
\end{equation}
 Afterwards, since $\|\varphi+c_{\varepsilon} u_{x_{0},\varepsilon} \|_{L^{2^{*r}}}^{2}=1$ we write
$$ S_{\theta,r}(\varphi) \leq c_{\varepsilon}^{2} \| u_{a, \varepsilon}\|_{r}^{2} = S_{r} \left[ 1 - \| \varphi \|_{L^{2^{*r}}}^{2^{*r}} \right]^{\frac{N-2r}{N}}+o(1). $$
Using (\ref{eqdimanche1}), direct computations show that
\begin{equation} \label{Eps}
S_{\theta,r}(\varphi) \leq S_{r} \left(  1 - \| \varphi \|_{L^{2^{*r}}}^{2^{*r}} \right)^{\frac{N-2r}{N}}.
\end{equation}
On the other way, multiplying \eqref{BN} by $u_{\theta}$ and integrating, we get
$$ S_{\theta,r}(\varphi) = \Lambda_\theta \int_{\Omega} \vert u_{\theta}+ \varphi \vert^{2^{*r}-2}(u_{\theta}+ \varphi) u_{\theta}dx. $$
Thus, the H\"older inequality gives
$$ S_{\theta,r}(\varphi) \leq \Lambda_\theta \left[  \int_{\Omega} \left( \vert u_{\theta}+ \varphi \vert^{2^{*r}-1}\right)^{\frac{2^{*r}}{2^{*r}-1}} dx \right]^{\frac{2^{*r}-1}{2^{*r}}} \left[ \int_{\Omega} {\vert u_{\theta} \vert}^{2^{*r}} dx \right]^{\frac{1}{2^{*r}}} $$
\begin{equation} \label{eq}
 S_{\theta,r}(\varphi) \leq \Lambda_\theta \| u_\theta \|_{L^{2^{*r}}}.
\end{equation}
Applying the Sobolev inequality we obtain that
\begin{equation} \label{S}
 S_{\theta,r}(\varphi) \leq \Lambda_\theta \frac{1}{S_{r}^{\frac{1}{2}}} \|u\|_{r}.
\end{equation}
Or
\begin{equation}
\| u_{\theta}\|_{r}^{2}=S_{\theta,r}(\varphi).
\label{eqdimanche2}
\end{equation}
Combining (\ref{eqdimanche2}) and \eqref{S} we find
\begin{equation} \label{Def}
S_{\theta,r}(\varphi) \leq \Lambda_{\theta} \left[ 1 - \| \varphi \|_{L^{2^{*r}}}^{2^{*r}} \right]^{\frac{1}{2^{*r}}}.
\end{equation}
Now, multiplying \eqref{BN} by $ (u_\theta + \varphi) $ and integrating, using (\ref{Def}) we acquire\\
\textbf{If $r$ is even}
\begin{equation} \label{Int}
\int_{\Omega} (-\Delta)^{\frac{r}{2}}  u_{\theta} \cdot  (-\Delta)^{\frac{r}{2}} \varphi dx = \Lambda_\theta - S_{\theta,r}(\varphi).
\end{equation}
Combining \eqref{Def} and \eqref{Int} we are lead to
$$ \int_{\Omega} (-\Delta)^{\frac{r}{2}}  u_{\theta} \cdot  (-\Delta)^{\frac{r}{2}} \varphi dx \geq \Lambda_\theta \left[ 1- \left( 1- \int_{\Omega} {\vert \varphi \vert}^{2^{*r}} \right)^{\frac{N-2r}{2N}} \right] > 0, $$
\textbf{If $r$ is odd} \\
\begin{equation} \label{od}
\int_{\Omega}  \nabla (-\Delta)^{\frac{r-1}{2}}  u_{\theta} \cdot  \nabla (-\Delta)^{\frac{r-1}{2}}  \varphi dx = \Lambda_\theta - S_{\theta,r}(\varphi).
\end{equation}
Combining \eqref{Def} and \eqref{od} we have
$$ \int_{\Omega}  \nabla (-\Delta)^{\frac{r-1}{2}}  u_{\theta} \cdot  \nabla (-\Delta)^{\frac{r-1}{2}}  \varphi dx \geq \Lambda_\theta \left[ 1- \left( 1- \int_{\Omega} {\vert \varphi \vert}^{2^{*r}} \right)^{\frac{N-2r}{2N}} \right] > 0. $$
This means that $ u_\theta $ is not in $ H^{r}_{0}(\Omega) $ and we have $ S_{\theta,r}(\varphi)< S_{0,r}(\varphi)$. \\
Indeed, let us note that any minimizer $ u_\theta \in H^{r}_{\theta}(\Omega)$ of $S_{\theta,r}(\varphi)$ is not in $ H^{r}_{0}(\Omega). $ \\
Arguing by contradiction, suppose that $ S_{\theta,r}(\varphi) = S_{0,r}(\varphi), $ thus $ S_{0,r}(\varphi)= S_{\theta,r}(\varphi) = \|u \|_{r}^{2} $ and $ \|u + \varphi \|_{L^{2^{*r}}} = 1. $ Therefore $ u_\theta $ be a minimizer of $ S_{0,r}(\varphi), $ as a result $ u_\theta \in H^{r}_{0}(\Omega), $ which gives a contradiction. \\
\textbf{Proof of (iii).}
Suppose $\varphi$ be in $ H^{r}_{0}(\Omega). $ We admit first that for $\|\varphi\|_{L^{2^{*r}}} > 1, $ \\
\textbf{If $r$ is even} \\
 we have,
\begin{equation} \label{Ph}
S_{\theta,r}(\varphi)= \inf_{ \substack{ u\in H_{\theta}^{r}(\Omega) \\ \|u+\varphi\|_{L^{2^{*r}}} \leq 1}} \displaystyle{ \int_{\Omega} |(-\Delta)^{\frac{r}{2}} u  |^{2}}dx .
\end{equation}
We find a convex problem. In this case, based on \cite {ET}, we will use a duality's method.\\
For all $ p \in L^2(\Omega),
$ \\
 Define
$$  \beta_{\theta} = \sup_{ \substack{ u\in H_{\theta}^{r}(\Omega) \\ \|u+\varphi\|_{L^{2^{*r}}} \leq 1}} \displaystyle{\int_{\Omega} p \left( (-\Delta)^{\frac{r}{2}} u \right) } \  \mbox{and} \ \beta_{0} = \sup_{\substack{ u\in H_{0}^{r}(\Omega) \\ \|u+\varphi\|_{L^{2^{*r}}} \leq 1}} \int_{\Omega} p \left( (-\Delta)^{\frac{r}{2}} u \right) .
$$
We get
$$  \beta_{\theta} = \sup_{\substack{ v\in H_{\theta}^{r}(\Omega) \\ \|v \|_{L^{2^{*r}}} \leq 1}} \int_{\Omega} p \left( (-\Delta)^{\frac{r}{2}} v\right)  - \int_{\Omega} p \left( (-\Delta)^{\frac{r}{2}} \varphi \right) \ \mbox{and} \ \beta_{0} = \sup_{\substack{ v\in H_{0}^{r}(\Omega) \\ \|v \|_{L^{2^{*r}}} \leq 1}} \int_{\Omega} p \left( (-\Delta)^{\frac{r}{2}} v\right)  - \int_{\Omega} p \left( (-\Delta)^{\frac{r}{2}} \varphi \right). $$
We will prove that we have
\begin{eqnarray} \label{B}
\hspace{-7cm} \beta_{\theta} = \beta_{0}, \ \ \ \mbox{for all}\ \   p \in L^2(\Omega)
\end{eqnarray}
Initially, we observe that $ \beta_{\theta}$ and  $ \beta_{0}$ are finite, due to the Holder inequality
 $$ \displaystyle\vert \int_{\Omega} p \left( (-\Delta)^{\frac{r}{2}} v \right)   \vert \leq \|p \|_{L^{2}} \|(-\Delta)^{\frac{r}{2}} v  \|_{L^{2}}.$$
  We include that the linear operator
$$ L: H_{\theta}^{r}(\Omega) \rightarrow \mathbb{R} $$
$$ v  \rightarrow  \int_{\Omega} p \left( (-\Delta)^{\frac{r}{2}} v \right)  $$
is continuous for the $ L^{2^{*r}} $ topology. So, there exists $ \tilde{p} \in L^{\frac{2 N}{N+2r}} $ even that for all $ v \in H_{\theta}^{r}(\Omega)$ we find $ L(v) = \int_{\Omega} \tilde{p} v. $\\
We consider that
 \begin{equation}
 \beta_{\theta} = \sup_{\substack{v \in H_{\theta}^{r}(\Omega) \\ \|v \|_{L^{2^{*r}}} \leq 1}} \int_{\Omega} \tilde{p} v - \int_{\Omega} p \left( -\Delta)^{\frac{r}{2}} \varphi \right) ; \ \ \ \beta_{0} = \sup_{\substack{ v \in H_{0}^{r}(\Omega) \\ \|v \|_{L^{2^{*r}}} \leq 1}} \int_{\Omega} \tilde{p} v - \int_{\Omega} p \left( (- \Delta)^{\frac{r}{2}} \varphi \right).
 \end{equation}
 On the other side, for all $ \tilde{p} \in L^{\frac{2 N}{N+2r}}, $ we have
 \begin{equation} \label{Eg}
 \sup_{ \substack{ v \in H_{\theta}^{r}(\Omega) \\ \|v \|_{L^{2^{*r}}} \leq 1}} \int_{\Omega} \tilde{p} v =\sup_{\substack{ v \in L^{2^{*r}}(\Omega) \\ \|v \|_{L^{2^{*r}}} \leq 1}} \int_{\Omega} \tilde{p} v   = \sup_{ \substack{ v \in H_{0}^{r}(\Omega) \\ \|v \|_{L^{2^{*r}}} \leq 1}} \int_{\Omega} \tilde{p} v = \|\tilde{p} \|_{L^{\frac{2 N}{N+2r}}}.
 \end{equation}
 Actually, in the case where $ \|\varphi\|_{L^{2^{*r}}} > 1, $ we have to prove that
 \begin{equation}  \label{MM}
 \frac{1}{2} S_{\theta,r}(\varphi) = \sup_{p \in L^{2}(\Omega) } \left\lbrace - \frac{1}{2} \int_{\Omega} {\vert p \vert}^{2} - \sup_{ \substack{u\in H_{\theta}^{r}(\Omega) \\ \|u+\varphi\|_{L^{2^{*r}}} \leq 1}} \int_{\Omega} p \left( (-\Delta)^{\frac{r}{2}} u\right) \right\rbrace
\end{equation}
and
\begin{equation} \label{MA}
 \frac{1}{2} S_{0,r}(\varphi) = \sup_{p \in L^{2}(\Omega) } \left\lbrace - \frac{1}{2} \int_{\Omega} {\vert p \vert}^{2} - \sup_{\substack{ u\in H_{0}^{r}(\Omega) \\ \|u+\varphi\|_{L^{2^{*r}}} \leq 1}} \int_{\Omega} p \left( (-\Delta)^{\frac{r}{2}} u \right) \right\rbrace.
\end{equation}
Let us write, for $ p \in L^2(\Omega) $ and for $ u \in H_{\theta}^{r}(\Omega), $
$$ L(u, p)= - \frac{1}{2} \int_{\Omega} {\vert p \vert}^{2}- \int_{\Omega} \left( (-\Delta)^{\frac{r}{2}} u \right)  p. $$
We show that
\begin{eqnarray} \label{L}
 \sup_{p \in L^{2}(\Omega)} L(u,p) = \frac{1}{2} \int_{\Omega} {\vert (-\Delta)^{\frac{r}{2}} u \vert}^{2}.
\end{eqnarray}
Indeed, \\
Firstly, we have
$$ L(u, (-\Delta)^{\frac{r}{2}} u)= - \frac{1}{2} \int_{\Omega} {\vert (-\Delta)^{\frac{r}{2}} u  \vert}^{2} + \int_{\Omega} \left( (-\Delta)^{\frac{r}{2}} u \right)^2 , $$
thus,
$$ L(u, (-\Delta)^{\frac{r}{2}} u) = \frac{1}{2} \int_{\Omega} {\vert (-\Delta)^{\frac{r}{2}} u \vert}^{2}. $$
Therefore,
\begin{equation}
\sup_{p \in L^{2}(\Omega)} L(u,p) \geq L(u, (-\Delta)^{\frac{r}{2}} u).
\label{eqdimanche3}
\end{equation}
On the other hand,
we have,
$$ \frac{1}{2} \|(-\Delta)^{\frac{r}{2}} u+ p\|_{L^{2}}^{2} = \frac{1}{2} \| (-\Delta)^{\frac{r}{2}} u \|_{L^{2}}^{2} + \frac{1}{2} \|p \|_{L^{2}}^{2} + \int_{\Omega} ((-\Delta)^{\frac{r}{2}} u) p , $$
then,
$$ L(u, p)= - \frac{1}{2} \int_{\Omega} {\vert p \vert}^{2}- \int_{\Omega} ((-\Delta)^{\frac{r}{2}} u) p \leq - \frac{1}{2} \int_{\Omega} {\vert p \vert}^{2}+ \frac{1}{2} \int_{\Omega} {\vert (-\Delta)^{\frac{r}{2}} u + p \vert}^{2} dx - \int_{\Omega} ((-\Delta)^{\frac{r}{2}} u) p. $$
Thus
\begin{equation}
\sup_{p \in L^{2}(\Omega)} L(u,p) \leq \frac{1}{2} \int_{\Omega} {\vert (-\Delta)^{\frac{r}{2}} u  \vert}^{2}.
\label{eqdimanche4}
\end{equation}
Combining (\ref{eqdimanche3}) and (\ref{eqdimanche4}), we get (\ref{L}).\\
Now, by (\ref{Ph}) and (\ref{L}) we have
 \begin{eqnarray*}
  \ \ \ \ \  \frac{1}{2} S_{\theta,r}(\varphi) =  \inf_{ \substack{ u\in H_{\theta}^{r}(\Omega) \\ \|u+\varphi\|_{L^{2^{*r}}} \leq 1}}\sup_{p \in L^{2}(\Omega) } L(u,p). \hspace{8cm} (Y)
 \end{eqnarray*}
 Let us justify that $(Y)= (Y^{*})$ where $(Y^{*})$ is the dual problem of $ (Y), $ defined by
 \begin{eqnarray*}
 \ \ \  \sup_{p \in L^{2}(\Omega) } \inf_{ \substack{ u\in H_{\theta}^{r}(\Omega) \\ \|u+\varphi\|_{L^{2^{*r}}} \leq 1}}  L(u,p) . \hspace{10cm} \ \  (Y^{*})
 \end{eqnarray*}
 Let us define \\
 $$  \hspace{-8cm} A= \left\lbrace u\in H_{\theta}^{r}(\Omega); \|u+\varphi\|_{L^{2^{*r}}} \leq 1 \right\rbrace. $$
Let $ u_{\theta}$ be a minimizer that realizes $S_{\theta,r}(\varphi) $ and let  $ p_{\theta} = -(- \Delta)^{\frac{r}{2}} u_{\theta}. $ From \eqref{L}, we get
 \begin{eqnarray} \label{eqdimanche5}
 L(u_{\theta},p) \leq \sup_{p \in L^{2}(\Omega)} L(u_{\theta},p) = \frac{1}{2} S_{\theta,r}(\varphi)  \ \ \ \mbox{for all} \ \ \ p \in L^{2}(\Omega).
\end{eqnarray}
Actually, let $ u \in A $ we have
 $$ L(u,p_{\theta}) \geq  - \frac{1}{2} \int_{\Omega} {\vert p_{\theta} \vert}^{2}- \sup_{u \in A} \int_{\Omega} (-\Delta u )^{\frac{r}{2}} p_{\theta}. $$
 Indeed, by definition we write
 \begin{equation} \label{I}
 \sup_{u \in A} \int_{\Omega} ((-\Delta)^{\frac{r}{2}} u ) p_{\theta} \geq  \int_{\Omega} ((-\Delta)^{\frac{r}{2}}u) p_{\theta}.
 \end{equation}
 Multiplying \eqref{I} by $-1, $ we get
 \begin{equation} \label{2A}
 - \sup_{u \in A} \int_{\Omega} ((-\Delta)^{\frac{r}{2}} u ) p_{\theta} \leq  - \int_{\Omega} ((-\Delta)^{\frac{r}{2}} u ) p_{\theta},
 \end{equation}
we add $\displaystyle{ - \frac{1}{2} \int_{\Omega} {\vert p_{\theta} \vert}^{2}dx} $ in \eqref{2A}, we obtain
 $$ - \frac{1}{2} \int_{\Omega} {\vert p_{\theta} \vert}^{2} - \sup_{u \in A} \int_{\Omega} ((-\Delta)^{\frac{r}{2}} u ) p_{\theta}  \leq - \frac{1}{2} \int_{\Omega} {\vert p_{\theta} \vert}^{2} - \int_{\Omega} ((-\Delta)^{\frac{r}{2}} u ) p_{\theta}. $$
 Therefore,
 $$ - \frac{1}{2} \int_{\Omega} {\vert p_{\theta} \vert}^{2} dx - \sup_{u \in A} \int_{\Omega} ((-\Delta)^{\frac{r}{2}} u ) p_{\theta} dx \leq L(u,p_{\theta}). $$
Now, integrating by part and using the fact that $ (-\Delta)^{\frac{r}{2}} u_{\theta} = 0$ on $ \partial \Omega, $, we have
$$ L(u,p_{\theta}) \geq  - \frac{1}{2} \int_{\Omega} {\vert p_{\theta} \vert}^{2}- \sup_{u \in A} \int_{\Omega}  u ((-\Delta)^{\frac{r}{2}} p_{\theta}). $$
On the other hand, using (\ref{Eg}), we see that
$$
 \begin{array}{cll}
\displaystyle{ - \sup_{u \in A} \int_{\Omega}  u ((-\Delta)^{\frac{r}{2}} p_{\theta})dx } &=& \displaystyle{- \sup_{u \in A} \int_{\Omega}  (u + \varphi) (-\Delta)^{\frac{r}{2}} p_{\theta})dx + \int_{\Omega} \varphi ((-\Delta)^{\frac{r}{2}} p_{\theta})dx }\\
 &=& \displaystyle{ - \| (-\Delta)^{\frac{r}{2}} p_{\theta} \|_{L^{\frac{2 N}{N+2r}}} + \int_{\Omega} \varphi ((-\Delta)^{\frac{r}{2}} p_{\theta})dx }\\
& =& \displaystyle{ - \| (-\Delta)^{\frac{r}{2}} p_{\theta} \|_{L^{\frac{2 N}{N+2r}}} + \int_{\Omega} ((-\Delta)^{\frac{r}{2}} \varphi) p_{\theta} - \int_{ \partial \Omega} \varphi ((-\Delta)^{\frac{r}{2}} p_{\theta})dx }. \\
\end{array}
$$
Using $ (-\Delta)^{\frac{r}{2}} p_{\theta} = 0 $ on $ \partial \Omega $ we obtain,
$$\displaystyle{ - \sup_{u \in A} \int_{\Omega}  u ((-\Delta)^{\frac{r}{2}} p_{\theta})dx }= \displaystyle{ - \| (-\Delta)^{\frac{r}{2}} p_{\theta} \|_{L^{\frac{2 N}{N+2r}}} + \int_{\Omega} ((-\Delta)^{\frac{r}{2}} \varphi) p_{\theta}},$$
and therefore
\begin{equation} \label{No}
L(u,p_{\theta}) \geq  - \frac{1}{2} \int_{\Omega} {\vert p_{\theta} \vert}^{2}\displaystyle{ - \| (-\Delta)^{\frac{r}{2}} p_{\theta} \|_{L^{\frac{2 N}{N+2r}}} + \int_{\Omega} ((-\Delta)^{\frac{r}{2}} \varphi) p_{\theta}.}
\end{equation}
However, the Euler equation \eqref{BN} for $ u_\theta$ gives
\begin{equation} \label{La}
\| (-\Delta)^r u_{\theta} \|_{L^{\frac{2 N}{N+2r}}} = \vert \Lambda_{\theta} \vert.
\end{equation}
Contrastingly, multiplying the Euler equation \eqref{BN} by $  (u_\theta+ \varphi), $  we obtain
\begin{equation} \label{Eeq}
\Lambda_{\theta} = \int_{\Omega} {\vert (-\Delta)^{\frac{r}{2}} u_{\theta} \vert}^{2} +  \int_{\Omega} (-\Delta)^{\frac{r}{2}} u_{\theta} \cdot (-\Delta^{\frac{r}{2}}) \varphi dx.
\end{equation}
From Remark~\ref{rm2} we know that $ \Lambda_{\theta}<0$ since $\|\varphi\|_{L^{2^{*r}}}<1$,  therefore \eqref{La} and \eqref{Eeq} give
  \begin{equation} \label{De}
 -\| (-\Delta)^{\frac{r}{2}} p_{\theta} \|_{L^{\frac{2 N}{N+2r}}} +  \int_{\Omega} p_{\theta} \cdot (-\Delta)^{\frac{r}{2}} \varphi dx = S_{\theta,r}(\varphi) .
\end{equation}
We replace \eqref{No} into \eqref{De}, we find
\begin{equation} \label{NO}
L(u,p_{\theta}) \geq  \frac{1}{2} S_{\theta,r}(\varphi) \ \ \ \mbox{for all } \ u \in A.
\end{equation}
Regarding to \cite{ET} that \eqref{NO} and \ref{eqdimanche5} conclude that $ (Y) = (Y^{*}). $
This proof is valid for $ \frac{1}{2} S_{\theta,r}(\varphi) $ instead of $ \frac{1}{2} S_{0,r}(\varphi), $ then we have proved \eqref{MM} and \eqref{MA}. Consequently, we have \eqref{B} and then we conclude that $ S_{\theta,r}(\varphi)=S_{0,r}(\varphi). $ \\
\textbf{If $r$ is odd} \\
In the case when $r $ is odd we find the same results just we have
 $$S_{\theta,r}(\varphi)= \inf_{ \substack{ u\in H_{\theta}^{r}(\Omega) \\ \|u+\varphi\|_{L^{2^{*r}}} \leq 1}} \displaystyle{ \int_{\Omega} {\vert \nabla((-\Delta)^{\frac{r-1}{2}} u) \vert}^{2} }$$ instead of $ S_{\theta,r}(\varphi)= \inf_{ \substack{ u\in H_{\theta}^{r}(\Omega) \\ \|u+\varphi\|_{L^{2^{*r}}} \leq 1}} \displaystyle{ \int_{\Omega} {\vert (-\Delta)^{\frac{r}{2}} u \vert}^{2} } . $  And then we use the same steps to conclude at the end that $ S_{\theta,r}(\varphi)=S_{0,r}(\varphi). $
 This ends the proof of the Theorem \ref{th2}.   \hfill $\square$ \\
\begin{remark}~\\
If $\|\varphi\|_{L^{2^{*r}}}=1$ then $S_{\theta,r}(\varphi)=S_{0,r}(\varphi)=0$ and the infimum are achieved by $0$.\\
Indeed, let $\|\varphi\|_{L^{2^{*r}}} = 1, $ \\
By the definitions in (\ref{eq1}) and (\ref{eq2}) and according to Brezis-Lieb Lemma, for all $ u \in H_{0}^{r}(\Omega) $ we obtain
$$ \|u+\varphi\|_{L^{2^{*r}}} = \|u\|_{L^{2^{*r}}} + \|\varphi\|_{L^{2^{*r}}}+ o(1), $$
that gives, using $\|\varphi\|_{L^{2^{*r}}} = 1, $
$$ \|u\|_{L^{2^{*r}}}= 0 $$
 As a result, we find that $S_{\theta,r}(\varphi)= S_{0,r}(\varphi) = 0$ and the infimum are achieved by $0$.
\end{remark}
   \newpage

\end{document}